\author{Daniel Spivak}
\definecolor{OliveGreen}{rgb}{0.1, 0.4, 0.1}
\definecolor{gray-asparagus}{rgb}{0.0, 0.6, 0.3}
\title{One-point large deviations of the directed landscape geodesic}
\theoremstyle{plain}
\newtheorem{thm}{Theorem}[section]
\newtheorem{pro}[thm]{Proposition}
\newtheorem{lem}[thm]{Lemma}
\theoremstyle{definition}
\newtheorem{prob}[thm]{Problem}
\numberwithin{equation}{section}
\begin{document}

\maketitle

\begin{abstract}

The directed landscape, the central object in the Kardar-Parisi-Zhang universality class, is shown to be the scaling limit of various models by \cite{dauvergne2022scalinglimitlongestincreasing} and \cite{Dauvergne_Ortmann_Virag_2018}. In his study of geodesics in upper tail deviations of the directed landscape, \cite{Zhipeng_Liu_2022a} put forward a conjecture about the rate of the lowest rate metric under which a geodesic between two points passes through a particular point between them. \cite{Das_Dauvergne_Virag_2024} disproved his conjecture, and made a conjecture of their own. This paper disproves that conjecture and puts the question to rest with an answer and a proof.
\end{abstract}

\section{Introduction}

The directed geodesic $\gamma$ is a random continuous function $[0,1]\to \mathbb R$. It is defined as the unique geodesic from $(0,0)$ to $(0,1)$ in the directed landscape and is the scaling limit of geodesics of several last-passage percolation models.  Our main result answers the conjectures of\
\cite{Zhipeng_Liu_2022a} and 
\cite*{Das_Dauvergne_Virag_2024}.

\begin{thm} \label{ratetheorem} For any fixed $t\in (0,1)$ as $r\to\infty$ we have 
$$ P(\gamma(t)\ge r)=\exp\left\{-r^3\frac{8+o(1)}{3a^2(3-\sqrt{8a})^2}\right\}, \qquad a=\min(t,1-t). 
$$\end{thm}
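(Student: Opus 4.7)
\textbf{Reduction and framework.} I would start from the almost-sure identity $\gamma(t)=\operatorname*{arg\,max}_{y\in\mathbb{R}}F(y)$, where $F(y)=\mathcal{L}(0,0;y,t)+\mathcal{L}(y,t;0,1)$, so that $\{\gamma(t)\ge r\}$ is equivalent to $\{\sup_{y\ge r}F(y)\ge\sup_{y<r}F(y)\}$. The two summands of $F$ are independent by the Markov property of $\mathcal{L}$ in time, and the $(1,2,3)$-scaling identifies each with $-y^2/t_i+t_i^{1/3}A_i(y/t_i^{2/3})$ for an independent parabolic Airy process $A_i$, where $t_1=t$ and $t_2=1-t$. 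Thus the problem is recast as an LDP question on the sum of two independent, rescaled parabolic Airy processes.

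\textbf{Lower bound via profile construction.} For parameters $y^*\ge r$ and heights $h_1,h_2>0$, I would consider the event that each $A_i$ lies above a parabolic bump of height $h_i$ centered at $y^*/t_i^{2/3}$ with the natural curvature dictated by KPZ scaling. Such a bump configuration makes $(y^*,t)$ a maximizer of $F$, hence realizes $\gamma(t)\ge r$. By the one-point Tracy--Widom tail $P(A_i(z)\ge h)=\exp(-\tfrac{4}{3}h^{3/2}(1+o(1)))$, combined with Brownian-Gibbs monotone-coupling techniques that promote pointwise tails to profile-level lower bounds, this event has probability at least $\exp(-\tfrac{4}{3}(h_1^{3/2}+h_2^{3/2})(1+o(1)))$. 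Optimizing $(y^*,h_1,h_2)$ under the argmax-shift constraint then yields the desired lower bound.

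\textbf{Upper bound via discretization.} For the matching upper bound I would discretize $\{y\ge r\}$ on a fine grid and apply the sharp one-point upper tail
$$P\bigl(\mathcal{L}(0,0;y,t_i)+y^2/t_i\ge h\bigr)=\exp\bigl(-\tfrac{4h^{3/2}}{3t_i^{1/2}}(1+o(1))\bigr)$$
at each grid point, with H\"older modulus-of-continuity estimates for $A_i$ controlling the fluctuations within each cell. A union bound with an appropriately tuned grid scale then reproduces the same rate as the lower bound.

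\textbf{The variational problem and main obstacle.} The decisive step is computing
$$\inf\bigl\{\tfrac{4}{3}(h_1^{3/2}+h_2^{3/2}):(y^*,h_1,h_2)\text{ admissible}\bigr\}.$$
Because a parabolic bump of height $h_i$ and natural curvature exactly cancels the drift $y^2/t_i$ on its support, whenever both bumps extend back to $z=0$ in rescaled coordinates the induced profile $F$ becomes constant on $[0,y^*]$, leaving the argmax ambiguous. The "naive" optimum, obtained by balancing the two legs symmetrically so that $F(r)\ge F(0)$, gives the rate $\tfrac{4r^3}{3[t(1-t)]^{3/2}}$; this agrees with the theorem at $t=1/2$ but is strictly smaller for $t\neq 1/2$, because it fails to force the argmax past $r$ with the claimed cost. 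A correct analysis must therefore treat the two legs asymmetrically --- distinguishing the shorter leg (length $a=\min(t,1-t)$) from the longer one, and handling the plateau regime where one bump just touches $z=0$ while the other overshoots --- producing a quadratic trade-off whose resolution introduces the factor $3-\sqrt{8a}$. Proving that this asymmetric configuration is genuinely optimal, together with the matching upper bound, is the main technical hurdle and is precisely what distinguishes the true answer from the Das--Dauvergne--Vir\'ag conjecture.
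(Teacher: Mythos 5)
Your reduction to the two-point profile $F(y)=\mathcal L(0,0;y,t)+\mathcal L(y,t;0,1)$ is a reasonable starting point, but the proof has a genuine gap, and in fact your own last paragraph names it without resolving it. The difficulty is not merely "the main technical hurdle"; it is the entire content of the theorem. Concretely: your upper bound cannot close. The event $\{\gamma(t)\ge r\}$ is contained in the one-point event $\{\sup_{y\ge r}F(y)\ge F(0)\}$, and a grid/union-bound argument with one-point Tracy--Widom tails gives exactly the "naive" rate $\tfrac{4r^3}{3[t(1-t)]^{3/2}}$ that you compute. As you yourself observe, this rate is \emph{strictly smaller} than $\tfrac{8}{3a^2(3-\sqrt{8a})^2}$ for $t\ne\tfrac12$, so the resulting bound $P(\gamma(t)\ge r)\le\exp(-\tfrac{4r^3}{3[t(1-t)]^{3/2}}(1+o(1)))$ is weaker than the claimed asymptotic and can never match it. The true event requires that the deviated point actually lie on the geodesic, i.e.\ that the path through $(r,t)$ beat every competing path; conditionally on the one-point deviation this is itself exponentially unlikely, and no amount of one-point tail information plus modulus-of-continuity control can detect that extra cost. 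Symmetrically, your lower-bound construction (two parabolic bumps at a single location) does not realize the event: it makes $F(y^*)$ large but leaves $F$ essentially flat on $[0,y^*]$, so the argmax is not forced past $r$.

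What is missing is a profile-level (indeed metric-level) large deviation analysis. The paper invokes the full LDP for the rescaled directed landscape of Das--Dauvergne--Vir\'ag, whose rate function is expressed via densities $\rho$ planted along curves, and then solves the constrained variational problem: minimize $\tfrac43\int\rho^{3/2}$ over curves $F$ and densities $\rho$ subject to the "no shortcut" constraint $d(0,0,F(t),t)\le e(0,0,F(t),t)$ for all $t$ (the function $g\ge 0$ of Problem \ref{relaxedstatement}). The Euler--Lagrange analysis under this active inequality constraint is what produces the non-obvious optimizer --- a straight segment to $(1,a)$, a second straight segment overshooting to $(x_B,2a)$ with $x_B=\tfrac{4(1-\sqrt{2a})}{3-\sqrt{8a}}$, then the curve $c_1\sqrt t+c_2t$ --- and hence the constant $3-\sqrt{8a}$. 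Your two-parameter family $(h_1,h_2)$ of point perturbations cannot parametrize this optimizer, and you give no mechanism for deriving it or for proving optimality (the analogue of Lemmas \ref{concave}--\ref{zero} and \ref{caseslemma}). Until that variational problem is set up and solved, neither bound in your proposal reaches the stated exponent.
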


We can also give the precise limit shape of the geodesic under this conditioning. 
Let $\gamma_{a,r}$ be a geodesic sampled from the conditional law of $\gamma$ given that $\gamma(a)\ge r$.

\begin{thm} \label{curveshapetheorem}
A $r \rightarrow \infty$,  $\gamma_{a,r}/r\to F_a$ in probability with respect to uniform convergence, where
$$ F_a(t)=
    \begin{cases}
      \frac ta, & \text{if } t \leq a \\
      1 + \frac{t-a}{a}(\frac{4(1-\sqrt {2a})}{3-\sqrt{8a}}-1), & \text{if } a \leq t \leq 2a
      \\
      \frac{4(\sqrt{t}-t)}{\sqrt{2a}(3-\sqrt{8a})}, & \text{if } 2a \leq t \leq 1.
    \end{cases}
$$
for $a\le 1/2$, and $F_a(t)=F_{1-a}(1-t)$ for $a\ge 1/2$. 

\end{thm}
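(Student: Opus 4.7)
The plan is to derive Theorem \ref{curveshapetheorem} from a path-level large deviation principle: the shape $F_a$ should arise as the unique minimizer of a rate functional $I$ on paths $f\colon[0,1]\to\mathbb R$ satisfying $f(0)=f(1)=0$ and $f(a)\ge 1$, and conditioning on the exponentially rare event $\{\gamma(a)\ge r\}$ then forces $\gamma/r$ to concentrate on $F_a$.

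I would first promote the one-point upper-tail estimate used in Theorem \ref{ratetheorem}, $P(\mathcal L(s,x;t,y)\ge-(y-x)^2/(t-s)+h)=\exp(-\tfrac{4h^{3/2}}{3(t-s)^{1/2}}(1+o(1)))$, together with its multi-point refinement, to a rate functional on candidate paths. Evaluated on a mesh $(t_i,f(t_i))$ and refined to the continuum, this yields $I(f)$. The functional is not simply $\tfrac{4}{3}(\int_0^1(f'(t))^2\,dt)^{3/2}$---that choice is minimized by a triangular path through $(a,1)$ and gives a rate of order $r^3/(a(1-a))^{3/2}$, strictly smaller than the $r^3/a^2$ scaling of Theorem \ref{ratetheorem} for small $a$---rather, $I$ also penalizes the landscape configuration needed to enforce $f$ as the argmax geodesic, i.e., to outcompete all alternative trajectories and not only to realize the passage-time sum along $f$.

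Next I would solve the constrained problem $\min\{I(f)\colon f(0)=f(1)=0,\ f(a)\ge 1\}$. Euler--Lagrange combined with complementary slackness predicts a three-piece solution, corresponding exactly to the three regions in Theorem \ref{curveshapetheorem}: an "active" linear piece on $[0,a]$ reaching $(a,1)$, a "transition" linear piece on $[a,2a]$ reaching $(2a,B)$ with $B=\tfrac{4(1-\sqrt{2a})}{3-\sqrt{8a}}$, and a "natural" square-root piece $\tfrac{4(\sqrt t-t)}{\sqrt{2a}(3-\sqrt{8a})}$ on $[2a,1]$. Direct verification would check that this candidate satisfies Euler--Lagrange in each region, a $C^1$ matching (free-boundary) condition at $t=2a$ locating where the influence of the pinning ends, and a kink condition at $t=a$ encoding the Lagrange multiplier for the active constraint, with $I(F_a)=\tfrac{8}{3a^2(3-\sqrt{8a})^2}$ matching Theorem \ref{ratetheorem}. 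Strict convexity of $I$ on the constraint set then singles out $F_a$ as the unique minimizer; the case $a\ge 1/2$ reduces to $1-a\le 1/2$ by the time-reversal symmetry of the directed landscape.

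Finally I would convert minimizer uniqueness into concentration: for any $\varepsilon>0$, any $f$ at uniform distance $\ge\varepsilon$ from $F_a$ satisfies $I(f)\ge c_a+\delta(\varepsilon)$ with $c_a=\tfrac{8}{3a^2(3-\sqrt{8a})^2}$ and $\delta(\varepsilon)>0$, so $P(\{\|\gamma/r-F_a\|_\infty>\varepsilon\}\cap\{\gamma(a)\ge r\})\le\exp(-r^3(c_a+\tfrac{1}{2}\delta(\varepsilon)))$ for all large $r$; dividing by $P(\gamma(a)\ge r)=\exp(-r^3 c_a(1+o(1)))$ from Theorem \ref{ratetheorem} gives the convergence in probability. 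The main obstacle is identifying the correct rate functional $I$ and executing the associated variational calculus at the active constraint and free boundary, together with the quantitative coercivity bound $I(f)-I(F_a)\gtrsim\|f-F_a\|_\infty^2$ near $F_a$ needed to drive concentration; a secondary challenge is exponential tightness of $\gamma/r$ under the conditioning, in particular ruling out excursions at heights $\gg r$ and high-frequency oscillations via moderate-deviation bounds for the directed landscape beyond the one-point setting of Theorem \ref{ratetheorem}.
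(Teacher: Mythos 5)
There is a genuine gap, and you have in fact named it yourself: the entire content of the argument lives in "identifying the correct rate functional $I$" on paths, and your proposal never constructs it. There is no off-the-shelf path-level LDP for the geodesic, and the multi-point tail estimates you invoke would only control the passage time \emph{along} $f$, not the requirement that $f$ beat every competing trajectory --- which, as your own triangle computation shows, is exactly where the naive Dirichlet functional fails. The paper resolves this by working one level up, with the metric-level large deviation principle of Das--Dauvergne--Vir\'ag: the cost of forcing $f$ to be a geodesic is the minimal $\tfrac43\int\rho^{3/2}$ over densities $\rho$ planted along $f$, subject to the ``no shortcut'' constraint $g(t)\ge 0$ of \eqref{d=e+e}--\eqref{defineg}, which encodes precisely that no straight chord undercuts the path. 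Solving that constrained variational problem (Problem \ref{relaxedstatement}, Proposition \ref{bigtheorem}, Lemma \ref{caseslemma}, Theorem \ref{geodesicshape}) is the bulk of the paper, and it is what produces the linear/linear/$\sqrt{t}$ three-piece structure with the free boundary at $t_B=2a$; your Euler--Lagrange-plus-complementary-slackness sketch guesses the right shape but cannot be ``directly verified'' against a functional you have not written down. Without this step the proposal is a plan to reprove the paper's main theorems, not a proof.

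Two smaller points on the concentration step. First, the paper does not need (and does not prove) a quantitative coercivity bound $I(f)-I(F_a)\gtrsim\|f-F_a\|_\infty^2$; it only uses that $I$ is a good rate function, that the constraint set $S$ is closed among finite-rate metrics, and that the minimizing metric is unique, so that $\inf_{U^C\cap S} I > I_{\min}$ by compactness and lower semicontinuity. Second, passing from convergence of the conditioned \emph{metric} to convergence of its \emph{geodesic} requires a separate deterministic stability argument (the three-case analysis in the paper's proof of Theorem \ref{curveshapetheorem}, using Lemma 4.10 of Das--Dauvergne--Vir\'ag to rule out large excursions and Lemma \ref{caseslemma} to get a strictly negative gap for paths far from $F$); this also disposes of the exponential tightness issue you flag. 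Your division-of-probabilities argument is the right shape for the final step, but it presupposes both the rate functional and an upper-bound LDP for the event $\{\|\gamma/r-F_a\|_\infty>\varepsilon,\ \gamma(a)\ge r\}$, neither of which is available except through the metric-level route.
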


The limiting shape of these geodesics is shown in the following two figures. Paradoxically, for $a < \frac 18$, the maximum of the geodesic is not attained at $a$, and the geodesic moves away from its final destination until $t = \frac 14$. The following two graphs show these geodesics for some values of $a$.

\begin{figure}[h]
		\centering
		\includegraphics[scale=0.6]{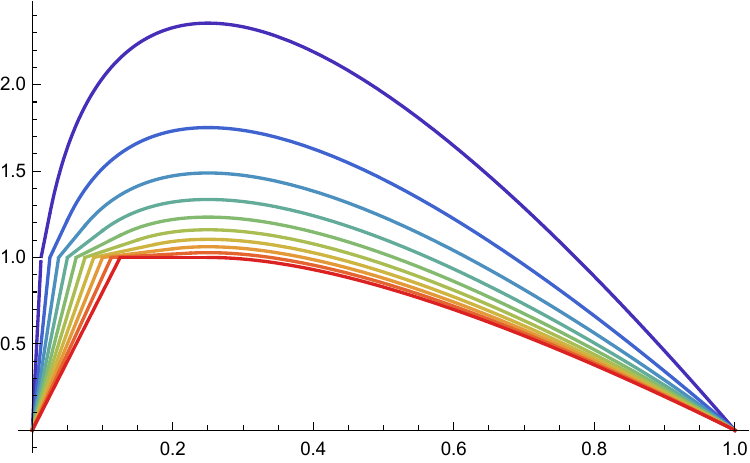}
		\caption{$0 < a \leq \frac 18$}
	\end{figure}

	\begin{figure}[h]
		\centering
		\includegraphics[scale=0.6]{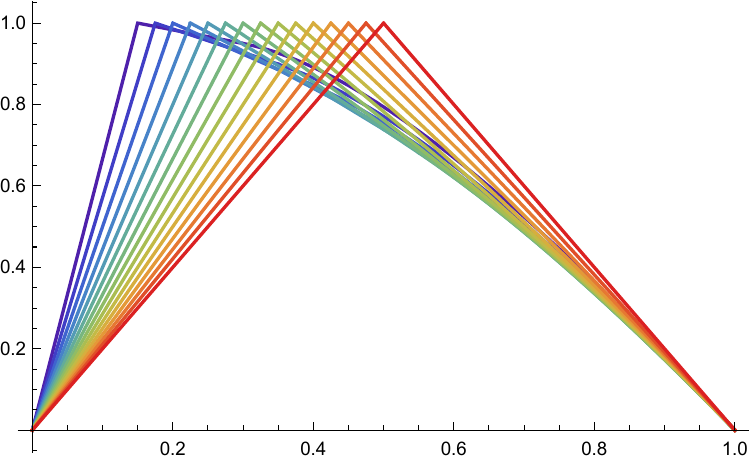}
		\caption{$\frac 18 < a \leq \frac 12$}
	\end{figure}

The directed geodesic has been studied recently, starting with the work in 
\cite{Dauvergne_Ortmann_Virag_2018} showing the existence and uniqueness of this random continuous function for a given start and end point almost surely. \cite{Dauvergne_Sarkar_Virag_2020} proved that the directed geodesic has $\frac 32$-variation, and \cite{Zhipeng_Liu_2022b} found the scale of its fluctuations conditioned on the geodesic being unusually long. The recent work by \cite{ganguly2023brownianbridgelimitpath}  shows that after rescaling, the long directed geodesic converges to a Brownian bridge. \cite{Zhipeng_Liu_2022a} gives a formula for the joint distribution of the last passage geodesic passing through a point, its length before the point and its length after the point, but it is not clear how one can deduce asymptotic tail probabilities from this formula.

The special case when $a=\frac 12$ was solved in \cite{agarwal2024sharpdeviationboundsmidpoint}, and for this value of $a$ the geodesic made of two straight line is optimal (compare this to \ref{curveshapetheorem}).

Let $\mathbb R_{\uparrow}^4$ denote pairs of elements of spacetime $(x,s,y,t)$ satisfying $s<t$. A directed metric (or metric) is a continuous function $e : \mathbb R_{\uparrow}^4 \rightarrow \mathbb R$ satisfying the (reverse) triangle inequality 

\begin{equation} \label{introtriangle}
    e(x,s,y,t) + e(y,t,z,u) \leq e(x,s,z,u)
\end{equation}

The directed landscape $\mathcal L$ is a random directed metric that are defined by:

\begin{enumerate}
    \item (Airy sheet marginals) For any $s<t$
    
    \begin{equation}
        \mathcal L(x,s,y,t) \stackrel{d}= \mathcal S(x(t-s)^{-\frac 23},y(t-s)^{-\frac 23})
    \end{equation}
    jointly in all $x,y$.
    \item
        (Independent increments) For any disjoint set of time intervals $\{ [s_i,t_i] \}$, the random functions $\mathcal L(\cdot,s,\cdot,t)$ are independent.
    \item (Metric composition law) For $r<s<t$ and for any $x,y$,
    \begin{equation}
        \mathcal L(x,r,y,t) = \max_z (\mathcal L(x,r,z,s) + \mathcal (z,s,y,t))
    \end{equation}
    \end{enumerate} 

\cite{Dauvergne_2024} showed that, almost surely, every pair of points is connected by one of 27 isomorphism types of geodesic networks.

\cite*{Das_Dauvergne_Virag_2024} showed that every pair of points are connected by a unique rightmost geodesic. They study the rescaling 

\begin{equation}
    \mathcal L_{\varepsilon}(x,s,y,t) = \varepsilon \mathcal L(\varepsilon^{-\frac 12}x,s,\varepsilon^{-\frac 12}y,t)
\end{equation}

They show the following Large deviation principle. For any open set $U$ and any closed set $V$ in the topology of uniform convergence on bounded sets,

\begin{equation}
\begin{aligned}
    \liminf_{\varepsilon \rightarrow 0} -\varepsilon^{\frac 32}\log{P(\mathcal L_{\varepsilon} \in U)} \leq \inf_{e \in U} I(e) \\
    \limsup_{\varepsilon \rightarrow 0} -\varepsilon^{\frac 32}\log{P(\mathcal L_{\varepsilon} \in V)} \geq \inf_{e \in V} I(e)
\end{aligned}
\end{equation}
where $I(e)$ is a lower semicontinuous function called the rate function. Moreover, $I$ is a "good rate function", meaning that $I^{-1}[0,r]$ is compact for any $r \geq 0$, and has the following concrete description:

The Dirichlet metric $d$ is given by $d(x,s,y,t) = -\frac{(y-x)^2}{t-s}$. All finite rate metrics $e$ satisfy $e \geq d$, and arise from "planting measures" along countably many curves: For a curve $\gamma$ and a nonnegative measure density $\rho$, we can define
\begin{equation}
    e_{\gamma,\rho}(\gamma(t_1),t_1,\gamma(t_2),t_2) = \int_{t_1}^{t_2} (\rho(t) - \gamma'(t)^2) dt
\end{equation}
where $e_{\gamma, \rho}(x_1,t_1,x_2,t_2) = - \infty$ for any other choice of $x_1,x_2$. Then the metric $e$ arising from planting measures along a finite or countable set of curves is the smallest metric which is bounded below by satisfying $e \geq d$, and for each curve $\gamma_i$ and planted measure $\rho_i$, $e \geq e_{\gamma_i, \rho_i}$. All metrics not of this form have infinite rate.

For each such planted measure, the contribution to the rate function is $I_i = \frac 43 \int \rho_i^{\frac 32} dt$, so that the rate function $I$ of a metric $e$ is the sum over all planted measures $I = \sum_i I_i$.
Our goal in this paper is to find the lowest rate metric for which a geodesic $F$ from $(0,0)$ to $(0,1)$ passes through the point $(1,a)$.

Our metrics will generally have a measure planted along just one curve, which will be the geodesic from $(0,0)$ to $(0,1)$. Any measure planted outside of this geodesic will not increase its length, but may increase the length of paths other than $F$, which only reduces the possibility of $F$ being a geodesic, all while increasing the rate.

We can then define $e$ or $e(F,\rho)$ as the metric arising from planting $\rho$ along $F$, which will be our geodesic.

We will in fact prove a result about $e$: 
Suppose $\mathcal M_{\varepsilon}$ is a metric sampled from the conditional law of $\mathcal L_\varepsilon$, given that the rightmost geodesic $\gamma$ of $e$ from $(0,0)$ to $(0,1)$ satisfies $\gamma(a) \geq 1$.

\begin{thm} \label{metrictheorem}
    As $\varepsilon\to 0$, $\mathcal M_{\varepsilon} \rightarrow e(F,\rho)$ in probability with respect to uniform convergence on bounded sets, with $F$ as in Theorem \ref{curveshapetheorem} and $\rho$ as given by:
    $$ \rho(t)=
    \begin{cases}
      \frac{1}{a^2(3-\sqrt{8a})^2}, & \text{if } t \leq 2a \\
      \frac{2}{at(3-\sqrt{8a})^2}, & \text{if } 2a \leq t \leq 1.
    \end{cases}
$$

\end{thm}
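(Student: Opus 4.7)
The plan is to invoke the large deviation principle of \cite*{Das_Dauvergne_Virag_2024} to reduce the claim to a deterministic variational problem, and then solve that problem by a carefully chosen ansatz. By the good rate function property, $\mathcal M_\varepsilon$ concentrates as $\varepsilon \to 0$ on the $I$-minimizers over metrics $e$ with $\gamma_e(a) \ge 1$, where $\gamma_e$ is the rightmost geodesic of $e$ from $(0,0)$ to $(0,1)$. Any mass planted away from the realized geodesic only increases $I$ without relaxing the constraint, so the minimizer takes the form $e(F, \rho)$ for a single curve $F$. The task thus reduces to minimizing $\mathcal I(F, \rho) = \frac{4}{3}\int_0^1 \rho(t)^{3/2}\,dt$ subject to $F(0) = F(1) = 0$, $F(a) \ge 1$, and $F$ being a geodesic of $e(F, \rho)$.

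Analyzing broken paths of the form $F$--Dirichlet--$F$, the geodesic condition is equivalent to the family of inequalities
\[
\int_s^t \rho(u)\,du \;\ge\; \int_s^t F'(u)^2\,du - \frac{(F(t) - F(s))^2}{t-s}, \qquad 0 \le s < t \le 1,
\]
and a short scaling argument gives $F(a) = 1$ at the optimum. Introducing a measure-valued Lagrange multiplier $\mu$ for this family plus a scalar $\lambda \ge 0$ for $F(a) = 1$, stationarity in $\rho$ yields $2\sqrt{\rho(u)} = \int \mathbf{1}[s \le u \le t]\,d\mu(s,t)$, and stationarity in $F$ yields an integro-differential equation for $F$ involving $\mu$ and the chord slopes $(F(u) - F(s))/(u-s)$, with a Dirac contribution at $u = a$ coming from $\lambda$. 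Guided by the stated answer, I would take the ansatz of Theorem \ref{metrictheorem}: $F$ piecewise linear on $[0, 2a]$ with a kink at $t = a$ and $F(t) \propto \sqrt{t} - t$ on $[2a, 1]$; $\rho$ constant on $[0, 2a]$ and $\rho(t) \propto 1/t$ on $[2a, 1]$; multiplier $\mu$ supported on the one-parameter family $\{(0, t) : t \in [2a, 1]\}$ with continuous density $\propto t^{-3/2}$ plus a positive atom at the endpoint $(0, 1)$. Verification amounts to (i) a direct algebraic check that the constraints $(0, t)$ are tight for $t \in [2a, 1]$, (ii) solving the KKT ODEs on $[2a, 1]$ to recover the explicit forms of $F$ and $\rho$, and (iii) fixing the free constants from $F(0) = 0$, $F(a) = 1$, $F(1) = 0$, and $C^1$-matching of $F$ at $t = 2a$ (forced by the absence of a multiplier/Dirac there). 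All multipliers come out non-negative, certifying stationarity.

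The main obstacle is showing this KKT solution is the global minimum, not merely a critical point: the functional is strictly convex in $\rho$ for fixed $F$, but the term $(F(t) - F(s))^2/(t-s)$ prevents joint convexity in $(F, \rho)$. I would address this by combining the non-negative Lagrange multipliers above (serving as a dual certificate) with direct exchange arguments ruling out alternative structures, in particular additional kinks in $F$ or non-monotone $\rho$. A secondary subtlety is the unexpected location of the transition at $t = 2a$: $C^1$-matching of a linear piece to the curve $\propto \sqrt{t} - t$ at $t = a$ is algebraically impossible while also accommodating $F(a) = 1$, so an intermediate linear segment on $[a, 2a]$ is forced; the exact value $t^* = 2a$ then falls out of optimizing the resulting one-parameter family of candidates. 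A direct calculation at the minimizer finally yields $\mathcal I = \frac{8}{3a^2(3-\sqrt{8a})^2}$, which also implies Theorem \ref{ratetheorem}.
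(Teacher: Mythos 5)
Your overall architecture (LDP reduction, then a deterministic variational problem over single-curve metrics, with the ansatz $F$ linear--linear--$(\sqrt t - t)$ and the transition at $t_B=2a$) matches the paper's, and your structural guesses are correct. But the two hardest steps are left as acknowledged obstacles rather than resolved, and one probabilistic step is skipped. The reduction ``$\mathcal M_\varepsilon$ concentrates on the $I$-minimizers over $S=\{e:\gamma_e(a)\ge 1\}$'' is not automatic from the good-rate-function property: you need (i) that $S$ is relatively closed among finite-rate metrics, so the LDP upper bound applies to $U^C\cap S$ --- this requires an a priori bound confining geodesics of nearby metrics to a compact spatial window --- and (ii) a matching lower bound, which requires exhibiting metrics in the \emph{interior} of $S$ with rate arbitrarily close to $I_{\min}$. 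Since the constraint is tight at the optimum, the minimizer itself lies on the boundary of $S$, and one must perturb it (the paper uses $\hat F=(1+\lambda)F$, $\hat\rho=(1+\lambda)^2\rho+\lambda$). Without both halves the conditional law need not concentrate near $e(F,\rho)$.

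More seriously, your KKT scheme certifies only stationarity. You correctly observe that the chord terms $(F(t)-F(s))^2/(t-s)$ destroy joint convexity in $(F,\rho)$, so nonnegative multipliers do not constitute a dual certificate of global optimality; the ``direct exchange arguments ruling out alternative structures'' that you defer to are precisely the bulk of the work. The paper supplies this content by proving existence of a global minimizer via compactness (concavification of $F$, monotone rearrangement of $\rho$, lower semicontinuity of the Dirichlet and $\int\rho^{3/2}$ functionals), showing the slack function $g$ vanishes at the terminal time and that its zero set is exactly an interval $[t_B,1]$, establishing regularity of $F'$ and $\rho$ there \emph{before} integrating the Euler--Lagrange equation, and deducing uniqueness from strict convexity in $\rho$. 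Note also that the paper imposes only shortcuts from the origin in its relaxed problem and verifies the full geodesic condition a posteriori (Lemma \ref{caseslemma}); your full constraint family over all $0\le s<t\le 1$ is the right target but makes the existence and regularity of the multiplier measure $\mu$ --- on which your $C^1$-matching at $t=2a$ relies --- an additional unproved assumption. As a plan the ansatz is right; as a proof it is not complete.
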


\section{Solution of a relaxed problem}

One possible candidate for $F$ is two straight line segments, from $(0,0)$ to $(1,a)$ and from $(1,a)$ to $(0,1)$, with $\rho$ just sufficient to compensate $d$, such that $e(0,0,F(t),t) = 0, \forall t \in [0,1]$. However, for $a < \frac 12$, this will not be a geodesic: instead, there will be straight lines from $(0,0)$ to a point beyond $(1,a)$ whose Dirichlet length is greater than the length of $F$ to that point. As a result, the geodesic will skip the steeper line segment to $(1,a)$ and take such a straight line as a shortcut to $F$, and then follow part of the other line segment to $(0,1)$. Similarly, if $a > \frac 12$, it will follow part of the line segment from $(0,0)$ to $(1,a)$ and then shortcut the steep segment to proceed directly to $(0,1)$.

This suggests that we should try to solve for a metric where the segment from $(0,0)$ to $(1,a)$ is not skipped by such a shortcut. In other words, we should ask what is the lowest rate metric for which there is a geodesic candidate $F$, so that the distance from $(0,0)$ along $F$ to any point on $F$ is greater than the distance of a shortcut from $(0,0)$ to that point.

Inspired by this intuition, we will first solve an intermediate optimization problem: fix $F_0$ and $\rho_0$ (and therefore $e(F_0,\rho_0)$) for $t \in [0,t_1]$. Consider pairs $(F,\rho)$ that agree with $(F_0,\rho_0)$ on $[0,t_1]$. Which pair minimizes the rate of the metric $e=e(F,\rho)$ on the time interval $[t_1,t_2]$, with the condition that the path along $F$ up to any $t\in [t_1,t_2]$ is not worse than the shortcut from $(0,0)$ to $(F(t),t)$? In other words, what is the lowest rate metric $e=e(F,\rho)$ on $[t_1,t_2]$  such that  

\begin{equation} \label{d=e+e}
    d(0,0,F(t),t) \leq  e(0,0,x_1,t_1) + e(x_1,t_1,F(t),t), \forall t \in (t_1,t_2) 
\end{equation}
In theory, the parameters of this optimization problem given by $e$ and $F$ up to time $t_1$. However, the only information used by the optimization problem are the scalar $e(0,0,x_1,t_1)$ and the starting point $x_1=F_0(t_1).$

We will store 
\begin{equation} \label{eq23}
b=    e(F_0,\rho_0)(0,0,x_1,t_1) -d(0,0,x_1,t_1)= e(0,0,x_1,t_1) - \frac{x_1^2}{t_1}
\end{equation}
as an initial condition in the form of the constant $b$ (where $b \geq 0$), so that we can focus on optimizing the path on $(t_1,t_2)$, and assume that $b$ is not so big as to cause \eqref{d=e+e} to hold for arbitrary $e \geq d$.

We will also assume that $\frac{x_1}{t_1} > \frac{x_2}{t_2}$. The problem is identical for $\frac{x_1}{t_1} < \frac{x_2}{t_2}$, but in this case, the entire picture should be flipped left to right.

We will define the function $F$, the rightmost geodesic candidate from $(x_1,t_1)$ to $(x_2,t_2)$, and $\rho$, the measure density planted along the path $(F(t),t)$, with respect to the Lebesgue measure on $t$.
Since we are optimizing over $t \in [t_1,t_2]$, we consider only the contribution to the rate function of the measure planted within this interval, $I_{2}$. Now we will formally state the relaxed optimization problem that we will solve. 

\begin{prob} \label{relaxedstatement}
    Given real numbers $t_1,x_1,t_2,x_2,b$ satisfying
    \begin{itemize}
    \item $0 < t_1 < t_2$
    \item $\frac{x_1}{t_1} > \frac{x_2}{t_2}$
    \item $0 \leq b \leq \frac{(x_2-x_1)^2}{t_2-t_1} + \frac{x_1^2}{t_1} - \frac{x_2^2}{t_2}$.
    \end{itemize}
    
    Minimize   
\begin{equation}
    I_{2}(F,\rho) := \frac 43 \int_{t_1}^{t_2}\rho^{\frac 32} dt
\end{equation}
    over $F$, $\rho$ satisfying
    \begin{itemize}
    \item $F : [t_1,t_2] \rightarrow \mathbb R$ is an absolutely continuous function, satisfying $F(t_1) = x_1$, $F(t_2) = x_2$.
    \item $\rho : [t_1,t_2] \rightarrow \mathbb R$ is a nonnegative Lebesgue-measurable function.
    \item $g:[t_1,t_2] \rightarrow \mathbb R$,
    \begin{equation}\label{defineg}
        g(t) := (b - \frac{F(t_1)^2}{t_1}) + \int_{t_1}^{t} (\rho(\tau)-F'(\tau)^2)d\tau + \frac{F(t)^2}{t} \geq 0, \forall t \in [t_1,t_2].
    \end{equation}
    \end{itemize}
\end{prob}

We will call a pair $(F,\rho)$ feasible if it satisfies all of the above conditions (without necessarily minimizing $I_{2}$). 
The answer to this problem is given by the following proposition.
\begin{pro} \label{bigtheorem}
    There exists a unique minimizing feasible pair $(F,\rho)$ for $I_{2}$, up to almost everywhere equivalence of $\rho$. Such a pair must satisfy the following conditions:
    
    There exists $t_{B} \in [t_1,t_2]$ such that $F$ is linear and $\rho$ is constant on $[t_1,t_{B})$, and $F(t) = c_1 \sqrt t + c_2 t$ and $\rho(t) = (\frac{F(t)}s-F'(t))^2$ for $t \in (t_{B},t_2]$, with $t_{B}=t_1$ if and only if $b=0$.   
    
    Moreover, $g', F', \rho$ are continuous at the merge point $t_{B}$.
\end{pro}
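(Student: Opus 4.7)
My plan is to treat Problem \ref{relaxedstatement} as a constrained calculus-of-variations problem in the state $F$ and control $\rho$, with the pointwise constraint $g\ge 0$ and fixed endpoints on $F$. The proof will proceed in three stages: existence via the direct method; derivation and explicit integration of the KKT necessary conditions in the active and inactive regimes; and uniqueness via the finite-dimensional matching system at the merge point.

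For existence, note that $g(t_2)\ge 0$ combined with $F(t_1)=x_1$ and $F(t_2)=x_2$ gives the a priori bound
\[
\int_{t_1}^{t_2} F'^2\,d\tau \;\le\; b + \tfrac{x_2^2}{t_2} - \tfrac{x_1^2}{t_1} + \int_{t_1}^{t_2}\rho\,d\tau,
\]
and since a minimizing sequence has bounded cost and $\int\rho\,d\tau$ is controlled by $(\int\rho^{3/2}\,d\tau)^{2/3}$ via H\"older, such a sequence is bounded in $H^1\times L^{3/2}$. Strict convexity and coercivity of $\rho\mapsto\int\rho^{3/2}$ together with weak continuity of $g$ on $H^1\times L^{3/2}$ then furnish a minimizer.

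For the necessary conditions, introduce a nonnegative Lagrange multiplier measure $\mu$ for $g\ge 0$, supported on the active set $A=\{g=0\}$ by complementary slackness, and set $\Lambda(\tau):=\mu([\tau,t_2])$, a nonincreasing right-continuous function with $\Lambda(t_2^+)=0$; write $\lambda=-\Lambda'$ for its density where it exists. Stationarity in $\rho\ge 0$ yields $\rho=\tfrac{4}{9}\Lambda^2$ a.e., and stationarity in $F$ yields the distributional Euler-Lagrange relation $\Lambda F''=\lambda(F'-F/t)$. On any inactive subinterval $\lambda\equiv 0$, so $\Lambda$ and hence $\rho$ are constant; if this constant is positive the EL relation forces $F$ to be linear. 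On the active set, differentiating $g\equiv 0$ yields $\rho=(F/t-F')^2$; combined with $\rho=\tfrac{4}{9}\Lambda^2$ this gives $\Lambda=\tfrac{3}{2}(F/t-F')$, and substituting back into the EL relation integrates to $\Lambda\propto 1/\sqrt t$, so that the first-order equation $F/t-F'=\tfrac{2}{3}\Lambda$ has general solution $F(t)=c_1\sqrt t+c_2 t$.

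Monotonicity of $\Lambda$ combined with its strictly decreasing form $\propto 1/\sqrt t$ on any active piece rules out alternating structure: once $\Lambda$ starts to decrease, it cannot rejoin a higher constant value. Together with $\Lambda(t_2^+)=0$, this forces the active set to be a terminal interval $[t_B,t_2]$ and the inactive set to be the initial interval $[t_1,t_B]$, with $t_B=t_1$ exactly when $g(t_1)=b=0$. Continuity of $\Lambda$ at $t_B$ (no atom of $\mu$ there, since $\mu$ has no mass in a left neighborhood of $t_B$) implies continuity of $\rho$; continuity of the conjugate momentum $2\Lambda F'$, together with $\Lambda(t_B)>0$ when $b>0$, implies continuity of $F'$; and $g'=\rho-(F/t-F')^2$ is then continuous as well. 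The five unknowns (two constants for the linear piece, $c_1,c_2$, and $t_B$) are pinned down by $F(t_1)=x_1$, $F(t_2)=x_2$, and continuity of $F$, $F'$, and $g$ at $t_B$ (where $g(t_B)=0$ determines $t_B$), yielding a unique solution across the admissible range of $b$. The main obstacle I anticipate is the rigorous exclusion of pathological KKT structures (alternating active/inactive intervals, Cantor-type active sets, or atoms of $\mu$ at interior points); the monotonicity and rigidity of $\Lambda$ are the key tools for this.
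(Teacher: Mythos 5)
Your route (existence by the direct method, then KKT/Pontryagin conditions with a measure multiplier $\mu$ for the state constraint $g\ge 0$) is genuinely different from the paper's, which avoids multipliers entirely and instead uses rearrangement lemmas (concave majorant of $F$, monotone rearrangement of $\rho$) plus direct local perturbations. The existence stage and the derivation of the Euler--Lagrange structure on the active and inactive pieces are sound in outline (up to harmless normalization constants in $\rho=c\Lambda^2$, and up to the fact that $g$ is only weakly \emph{upper} semicontinuous on $H^1\times L^{3/2}$, not weakly continuous --- which is still the right direction to preserve feasibility of the limit).

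The genuine gap is in the structural step that the active set $\{g=0\}$ is a terminal interval $[t_B,t_2]$. You claim this follows because $\Lambda$ is nonincreasing and strictly decreasing ($\propto 1/\sqrt t$) on active pieces, so it "cannot rejoin a higher constant value." But an alternating configuration never requires $\Lambda$ to increase: one can have $\Lambda=C_1/\sqrt t$ on an active piece $[s_1,s_2]$, then $\Lambda\equiv C_1/\sqrt{s_2}$ on an inactive piece $(s_2,s_3)$, then $\Lambda=C_2/\sqrt t$ on $[s_3,s_4]$ with $C_2/\sqrt{s_3}=C_1/\sqrt{s_2}$; this is perfectly consistent with monotonicity of $\Lambda$. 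The correct exclusion argument lives at the level of $g$, not $\Lambda$: on an inactive interval $(p,q)$ with $g(p)=g(q)=0$, your own EL relation forces $F$ linear and $\rho$ constant there, whence $g'=\rho-(F/t-F')^2$ is nondecreasing (because $F/t-F'$ is nonnegative and nonincreasing for linear $F$ below the ray through the origin), and a function vanishing at both endpoints, positive inside, with nondecreasing derivative is impossible. This is exactly the paper's Lemma \ref{zero} via Lemma \ref{line}; you need to supply it, and it in turn uses the a priori information $F'\le F/t$ (paper's Lemma \ref{concave}), which your proposal never establishes. Two further points need attention: (i) the existence of the multiplier measure for a state-constrained problem requires a constraint qualification, which is a nontrivial piece of machinery the paper deliberately avoids by arguing with explicit competitors; and (ii) your claim that $\mu$ has no atom at $t_B$ "since $\mu$ has no mass in a left neighborhood of $t_B$" does not follow --- absence of mass to the left says nothing about an atom at $t_B$ itself, so continuity of $\rho$, $F'$, $g'$ at the merge point needs a separate argument (the paper gets it from the one-sided jump structure of the monotone functions $\rho$ and $F'$ in Lemma \ref{curvecont}). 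Finally, uniqueness via the finite-dimensional matching system is only asserted, not proved; the paper's strict-convexity argument (averaging two minimizers) is the cleaner way to close that.
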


The function $g$ keeps track of the difference in $\eqref{d=e+e}$. 
    We can also rewrite \eqref{defineg} as 
    \begin{equation} \label{gprime}
    g(t) = b + \int_{t_1}^{t} (\rho(\tau) - (\frac{F(\tau)}{\tau} - F'(\tau))^2) d\tau
    \end{equation}
    and so $g'(t) = \rho(t) - (\frac{F(t)}t - F'(t))^2$ almost everywhere.

    The integral $\int_{t_1}^t \rho(\tau) d\tau$ is the contribution coming from the density $\rho$ planted along the curve. The remainder of the integral, $\int_{t_1}^t -F'(\tau)^2 d\tau$ is the Dirichlet metric term, penalizing $F$ for excessive curvature. The other terms, $\frac{F(t)^2}t - \frac{F(t_1)^2}{t_1} = d(0,0,F(t),t) - d(0,0,F(t_1),t_1)$, represent the difference in the Dirichlet metric of length of the shortcut to $t$, as compared to that to $t_1$. 

    Lastly, $b = g(t_1)$ should be interpreted as the result of having previously optimized over $t < t_1$.

    We may also write $g(F,\rho,b)$ instead of $g$ to emphasize which metric this $g$ arises from.

 When it is not ambiguous, we may write $I_{2}$ instead of $I_{2}(F,\rho)$. 
Similarly, we will write $e$ for the associated metric $e(F,\rho)$. 

We will prove Proposition \ref{bigtheorem} by establishing some basic facts about the minimizer, thereby reducing to two tractable optimization problems, one when $g$ is zero and one when $g$ is nonzero. We start with some generic lemmas about absolutely continuous functions.

\begin{lem}\label{derivativelembad}
    Suppose $F$ is absolutely continuous on an interval $J \subset [t_1,t_2]$. Then $\frac{F(t)}t$ is nonincreasing on $J$ if and only if $F'(t) \leq \frac{F(t)}t$ for $t \in J$, and is constant on $J$ if and only if $F'(t) = \frac{F(t)}t$ on $J$.
\end{lem}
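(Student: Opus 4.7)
My plan is to reduce the statement to the standard fact that an absolutely continuous function on an interval is nonincreasing (respectively, constant) if and only if its derivative is nonpositive (respectively, zero) almost everywhere.

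First, I would introduce the auxiliary function $h(t) := F(t)/t$ on $J$. Since $J \subset [t_1,t_2]$ with $t_1 > 0$, the map $t \mapsto 1/t$ is $C^\infty$ with bounded derivative on $J$, so the product $h = F \cdot (1/t)$ is absolutely continuous on $J$. The quotient rule, which is valid almost everywhere for absolutely continuous functions, gives
\begin{equation}
h'(t) = \frac{F'(t)\, t - F(t)}{t^2} = \frac{1}{t}\left(F'(t) - \frac{F(t)}{t}\right)
\end{equation}
for almost every $t \in J$. Because $t > 0$ throughout $J$, the sign of $h'(t)$ coincides with the sign of $F'(t) - F(t)/t$.

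Next I would apply the fundamental theorem of calculus for absolutely continuous functions: for any $s < u$ in $J$, $h(u) - h(s) = \int_s^u h'(\tau)\, d\tau$. From this it follows that $h$ is nonincreasing on $J$ if and only if $h'(\tau) \leq 0$ for almost every $\tau \in J$, and $h$ is constant on $J$ if and only if $h'(\tau) = 0$ for almost every $\tau \in J$. Combining this with the sign identity above yields both claims of the lemma.

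The only subtlety is the interpretation of the pointwise inequality $F'(t) \leq F(t)/t$: $F'$ is only guaranteed to exist almost everywhere, so the condition should be read as holding at every $t$ where $F'(t)$ exists, i.e.\ almost everywhere. There is no real obstacle here — the proof is essentially a one-line application of the chain rule and the characterization of monotonicity for absolutely continuous functions; I would just be careful to note that $t_1 > 0$ so that $1/t$ is well-behaved on $J$.
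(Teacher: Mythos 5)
Your proof is correct and follows essentially the same route as the paper's: the paper likewise observes that $F(t)/t$ is absolutely continuous and computes $\frac{d}{dt}\frac{F(t)}{t}=\frac{1}{t}\bigl(F'(t)-\frac{F(t)}{t}\bigr)$, leaving the appeal to the monotonicity characterization for absolutely continuous functions implicit. You have simply spelled out the details (and correctly used that $t_1>0$), so there is nothing to change.
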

\begin{proof}
$\frac{F(t)}t$ is absolutely continuous if and only if $F$ is, and 
    \begin{equation}
        \frac d{dt} \frac{F(t)}t = \frac 1t (F'(t) - \frac{F(t)}t).
    \end{equation}
\end{proof}

\begin{lem}\label{maximumlemma}
Suppose $F(t) = \int_a^t f(\tau) d\tau$ is an absolutely continuous function on $[A,B]$, and $G(t) = max_{\tau \leq t} F(\tau)$. Then $G(t) = \int_a^t g(\tau) d\tau$, where $g = f \chi_{_{F=G}}$.
\end{lem}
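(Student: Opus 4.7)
The plan is to exploit the fact that $G$ can change only where $F = G$, so the excursion intervals of $\{F < G\}$ contribute nothing to the integral of $f$. Concretely, set $E := \{t \in [A,B] : F(t) = G(t)\}$. Since $G - F$ is continuous and nonnegative, $E$ is closed, and its complement in $[A,B]$ is a countable disjoint union of relatively open intervals $(a_i, b_i)$.

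First I would show that $G$ is constant on each closed excursion $[a_i, b_i]$. Fix $\tau \in (a_i, b_i)$. By compactness and continuity, the maximum $G(\tau) = \max_{s \le \tau} F(s)$ is attained at some $s^* \in [A, \tau]$ with $F(s^*) = G(s^*)$, i.e., $s^* \in E$; since $E \cap (a_i, b_i) = \varnothing$ we must have $s^* \le a_i$. Then monotonicity of $G$ gives $G(\tau) \ge G(a_i) \ge G(s^*) = F(s^*) = G(\tau)$, so $G(\tau) = G(a_i)$. Letting $\tau \uparrow b_i$ and using continuity of $F$ and $G$ at the endpoints (which lie in $E$), we obtain $F(b_i) = G(b_i) = G(a_i) = F(a_i)$, and hence
$$\int_{a_i}^{b_i} f(\tau)\, d\tau = F(b_i) - F(a_i) = 0.$$

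Now set $g := f \chi_E$ and $\tilde G(t) := \int_A^t g(\tau)\, d\tau$. Then
$$\tilde G(t) = F(t) - \int_A^t f(\tau) \chi_{E^c}(\tau)\, d\tau,$$
and it remains to verify $\tilde G = G$ pointwise. If $t \in E$, the integral decomposes into the complete excursions $(a_i, b_i)$ with $b_i \le t$, each contributing zero by the previous step, so $\tilde G(t) = F(t) = G(t)$. If $t \in (a_i, b_i)$ for some $i$, the same cancellation kills every earlier complete excursion and leaves $\int_{a_i}^t f = F(t) - F(a_i)$, whence $\tilde G(t) = F(a_i) = G(a_i) = G(t)$ by the constancy established above. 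The main obstacle is the first step: pinning down that $G$ is constant on each $[a_i, b_i]$ by tracing where the running maximum is attained; once that is in hand the rest is a short bookkeeping argument with no appeal to differentiation theorems.
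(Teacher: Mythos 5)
Your proof is correct, and it takes a genuinely different route from the paper's. The paper compares $G$ against the variation function $V=\int|f|$, shows $H=V-G$ is nondecreasing so that $0\le G(y)-G(x)\le V(y)-V(x)$ (hence $G$ is absolutely continuous), and then identifies the density $g$ by an almost-everywhere differentiation argument on the sets $\{F=G\}$ and $\{F\neq G\}$. You instead decompose $\{F<G\}$ into its excursion intervals $(a_i,b_i)$, prove $G$ is constant on each by locating where the running maximum is attained, deduce $\int_{a_i}^{b_i}f=0$, and then verify the integral identity $G(t)=\int_A^t f\chi_{\{F=G\}}$ directly by countable additivity. Your argument is more elementary and self-contained (no appeal to Lebesgue differentiation or to the variation function), at the cost of being specific to the running-maximum structure; the paper's argument is the one that generalizes to showing the running max of a BV function has dominated variation. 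One small loose end in yours: if $B\notin E$, the last component of $E^c$ is $(a_i,B]$ rather than $(a_i,b_i)$ with $b_i\in E$, so the endpoint identity $F(b_i)=F(a_i)$ can fail there; but your case analysis for $t$ inside an excursion never uses completeness of the excursion containing $t$, and the single remaining point $t=B$ is covered by continuity of both sides, so this costs nothing.
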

\begin{proof}
    If $V = \int |f|$ is the variation of $F$, define $H=V-G$. If $x<y$, clearly $G(x) \leq G(y)$. If $G(y)>G(x)$, suppose that $z \in (x,y]$ with $F(z)=G(y)$. Then \begin{equation}
        V(y) \geq V(x) + (F(z)-F(x)) \geq V(x) + (G(y) - G(x)).
    \end{equation}
    It follows that $H$ is nondecreasing, so $G=\int g$ and $H = \int h$ with $g+h = |f|$ and $g,h \geq 0$. When $F(t) \neq G(t)$, we have $g=0$ almost surely. When $F(t)=G(t)$, we must have $F'(t) \geq 0$ (if $F'$ exists), so almost surely either $f=0$ or $f>0$ and $F$ is locally increasing. In both cases, $h=0$ almost surely. Therefore, $g = |f| \chi_{_{F=G}} = f \chi_{_{F=G}}$.
\end{proof}

\begin{lem}\label{dirichlet of majorant}
    Suppose $G$ is absolutely continuous on $[A,B]$, and
    \begin{equation}
        \hat F(\tau) = \sup_{s < \tau < t} (\frac{t-\tau}{t-s}G(s) + \frac{\tau-s}{t-s}G(t))
    \end{equation}

    is the concave majorant of $G$. Then $\int_A^B G'(\tau)^2 d\tau \geq \int_A^B \hat F(\tau)^2 d\tau$.
\end{lem}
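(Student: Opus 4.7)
I interpret the right-hand side as $\int_A^B \hat F'(\tau)^2\,d\tau$ (the Dirichlet integral of $\hat F$), which is the natural quantity in view of Problem~\ref{relaxedstatement} and the preceding lemmas. The plan is to exploit the standard structural description of a concave majorant: the contact set $C=\{\tau : \hat F(\tau)=G(\tau)\}$ is closed in $[A,B]$, and its complement is a countable disjoint union of open intervals $(a_i,b_i)$ on each of which $\hat F$ is affine, with $\hat F(a_i)=G(a_i)$ and $\hat F(b_i)=G(b_i)$ (endpoints lie in $C$, possibly extended to $A$ or $B$). Also, $\hat F$ is concave, hence locally Lipschitz on $(A,B)$ and thus absolutely continuous, so $\int_A^B \hat F'(\tau)^2\,d\tau$ splits as a sum over $C$ and over the $(a_i,b_i)$.

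First I would observe that on the contact set $C$ we have $\hat F'=G'$ almost everywhere. This is a general fact: if two absolutely continuous functions agree on a measurable set $E$, then their derivatives agree a.e.\ on $E$ (every density point of $E$ is a simultaneous differentiability point for both). In particular
\begin{equation*}
\int_C \hat F'(\tau)^2\,d\tau = \int_C G'(\tau)^2\,d\tau.
\end{equation*}

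Next, on each complementary interval $(a_i,b_i)$, $\hat F'$ is the constant slope $m_i := (G(b_i)-G(a_i))/(b_i-a_i)$. By Cauchy--Schwarz (equivalently, Jensen for $x\mapsto x^2$),
\begin{equation*}
\int_{a_i}^{b_i} G'(\tau)^2\,d\tau \ge \frac{1}{b_i-a_i}\Bigl(\int_{a_i}^{b_i} G'(\tau)\,d\tau\Bigr)^2 = (b_i-a_i)\,m_i^2 = \int_{a_i}^{b_i} \hat F'(\tau)^2\,d\tau.
\end{equation*}
Summing this inequality over $i$ and adding the contact-set identity yields $\int_A^B G'^2 \ge \int_A^B \hat F'^2$.

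The only delicate step is the justification that the complement of $C$ is a countable union of maximal open intervals on which $\hat F$ is affine; this is a standard consequence of concavity (each such component has endpoints in $C$, and any point where $\hat F>G$ lies in such a component because one can slightly lower the tangent-supporting affine function until it still exceeds $G$ nearby). Given this structural lemma, the rest is just Cauchy--Schwarz termwise, so I expect no real obstacle beyond being careful about the endpoints $A, B$ in case they are not in $C$ (they are handled by the same affine-on-a-component argument).
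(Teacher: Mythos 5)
Your proof is correct, and your reading of the statement is the right one: the right-hand side must be $\int_A^B \hat F'(\tau)^2\,d\tau$ (the lemma as printed drops the prime), and this is exactly what the paper proves. However, your route is genuinely different from the paper's. The paper does not decompose the domain into the contact set and its complementary intervals at all; instead it invokes the characterization of the Dirichlet energy as a supremum over finite partitions $P$ of $d_F(P)=\sum_i (F(\tau_{i+1})-F(\tau_i))^2/(\tau_{i+1}-\tau_i)$ (citing Lemma 5.1.6 of Dembo--Zeitouni), refines an arbitrary partition by inserting the contact points $s_i,t_i$ witnessing $\hat F(\tau_i)$, and uses that refinement increases $d_{\hat F}$, that $\hat F$ is affine between inserted points (so the refinement does not change $d_{\hat F}$ there), and that $\hat F=G$ on the refined partition, so $d_{\hat F}(P)\le d_G(P_-)\le \int G'^2$; taking the supremum over $P$ finishes. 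That argument needs only finite partitions and sidesteps the two real-analysis facts your proof leans on, namely that $\hat F'=G'$ a.e.\ on the contact set and that the complement of the contact set is a countable union of maximal open intervals with endpoints in the contact set on which $\hat F$ is affine. Both of your facts are standard and true (the first via density points of the contact set, the second from concavity), so your argument is complete modulo citing them; your version is arguably more transparent about \emph{where} the energy is saved (Cauchy--Schwarz on each affine component), while the paper's version is shorter given that the partition characterization of the Dirichlet energy is already needed elsewhere in the paper. One small point worth making explicit in your write-up: $\hat F$ agrees with $G$ at $A$ and $B$ (lowering an endpoint value preserves concavity, so the least concave majorant touches $G$ there), which disposes of the endpoint caveat you flag at the end.
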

\begin{proof}

    Recall that for a partition $P = \{a=\tau_1,\tau_2,\ldots,\tau_n = B \}$ of $[A,B]$, if
    \begin{equation} 
    d_{F}(P) = \sum_{i=1}^{n-1} \frac{F(\tau_{i+1})^2-F(\tau_i)^2}{\tau_{i+1}-\tau_i},
    \end{equation}
    then the Dirichlet length of $F$ is $\sup_P{d_{F}(P)}$; see Lemma 5.1.6 in \cite{dembo2009large}.

    Suppose $P$ is a partition of $[A,B]$. For $2 \leq i \leq n-1$, let $\hat F(\tau_i) = \frac{t_i-\tau_i}{t_i-s_i}G(s_i) + \frac{\tau-s_i}{t_i-s_i}G(t_i)$, where $\hat F$ agrees with $G$ on $s_i, t_i$ (if $\hat F(\tau_i) = G(\tau_i)$, set $s_i=t_i=\tau_i$). Let $P_-$ be the partition of $[t_1,t]$ by including $s_i$ and $t_i$, and $P_+ = P_- \cup P$. Since $P_+$ is a refinement of $P$, $d_{\hat F}(P) \leq d_{\hat F}(P_+)$. Since $\hat F$ is a straight line between $s_i$, $\tau_i$ and $t_i$, $d_{\hat F}(P_+) = d_{\hat F}(P_-)$. Since $\hat F$ and $G$ agree on $P_-$, we have $d_{\hat F}(P_-) = d_{G}(P_-)$, so taking supremums we get that
    \begin{equation}
        \int_{t_1}^t G'(\tau)^2  d\tau \geq \int_{t_1}^t  \hat F'(\tau)^2  d\tau.
    \end{equation}
\end{proof}

\begin{lem} \label{concave}
    If $(F,\rho)$ is feasible, then there exists a feasible pair $(\hat F, \rho)$ where $\hat F$ is concave and satisfies $\hat F'(t) \leq \frac{\hat F(t)}t$ for all $t \in [t_1,t_2]$. In particular, $I_{2}(F,\rho) = I_{2}(\hat F, \rho)$. Moreover, if $F \neq \hat F$, then $g(F,\rho,b)(t_2) < g(\hat F, \rho, b)(t_2)$.
\end{lem}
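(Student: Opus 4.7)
The plan is to construct $\hat F$ in two stages: first cap $F$ at the line $L(t):=tx_1/t_1$, then take the concave majorant. Explicitly, set $F^*(t):=\min(F(t),tx_1/t_1)$ for $t\in[t_1,t_2]$, and let $\hat F$ be the concave majorant of $F^*$ on $[t_1,t_2]$. The endpoint conditions $\hat F(t_1)=x_1$ and $\hat F(t_2)=x_2$ follow because the concave majorant agrees with the function at endpoints, and the hypothesis $x_1/t_1>x_2/t_2$ ensures $F^*(t_1)=x_1$ and $F^*(t_2)=x_2$. Concavity is by construction. To see $\hat F'(t)\leq \hat F(t)/t$, I would observe that $L$ is itself concave and majorizes $F^*$, so by minimality of the concave majorant $\hat F\leq L$, giving $\hat F(t)/t\leq x_1/t_1$. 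Then the tangent-secant inequality (from concavity and $\hat F(t_1)=x_1$) yields $\hat F'(t)\leq(\hat F(t)-x_1)/(t-t_1)$, and the elementary equivalence that $\hat F(t)/t\leq x_1/t_1$ is the same as $(\hat F(t)-x_1)/(t-t_1)\leq \hat F(t)/t$ completes the argument.

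Since $\rho$ is unchanged, $I_2(F,\rho)=I_2(\hat F,\rho)$ automatically, so feasibility of $(\hat F,\rho)$ reduces to $g(\hat F,\rho,b)(t)\geq 0$ for all $t\in[t_1,t_2]$. I would establish the stronger pointwise bound $g(\hat F,\rho,b)(t)\geq g(F,\rho,b)(t)$ via the two-step decomposition $F\to F^*\to \hat F$. In Step A, the difference is supported on maximal intervals $(a,b)$ on which $F(\tau)>\tau x_1/t_1$, with $F(a)=ax_1/t_1$ and $F(b)=bx_1/t_1$. Using the Cauchy--Schwarz bound $\int_a^t(F')^2\geq(F(t)-F(a))^2/(t-a)$, a direct algebraic manipulation yields
\[
g(F^*,\rho,b)(t)-g(F,\rho,b)(t)\geq \frac{a}{t(t-a)}\bigl(F(t)-tx_1/t_1\bigr)^2\geq 0,
\]
with strict inequality whenever $F(t)\neq tx_1/t_1$. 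In Step B, Lemma \ref{dirichlet of majorant} gives the global Dirichlet comparison $\int_{t_1}^{t_2}(\hat F')^2\leq \int_{t_1}^{t_2}((F^*)')^2$, and the pointwise relation $\hat F\geq F^*\geq 0$ (under the mild sign condition $F\geq 0$ relevant in the geometric application) yields $\hat F(t)^2/t\geq F^*(t)^2/t$.

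The principal obstacle is extending the Step-B comparison to subintervals $[t_1,t]$, since Lemma \ref{dirichlet of majorant} is a global statement while pointwise feasibility requires $\int_{t_1}^t(\hat F')^2\leq \int_{t_1}^t((F^*)')^2$ for every $t$. This is immediate at touching points of $\hat F$ with $F^*$, where the restriction $\hat F|_{[t_1,t]}$ is itself the concave majorant of $F^*$ on $[t_1,t]$ by a minimality argument; I would partition $[t_1,t_2]$ at the touching points, apply Lemma \ref{dirichlet of majorant} on each piece ending at a touching point, and handle the non-touching $t$ (which lie in an interval $[s_L,s_R]$ on which $\hat F$ is linear) by a direct Cauchy--Schwarz computation exploiting this linearity. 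Finally, the strict inequality $g(F,\rho,b)(t_2)<g(\hat F,\rho,b)(t_2)$ when $F\neq \hat F$ follows because either $F\neq F^*$ (strict via Step A, since the capped region has positive measure) or $F=F^*$ is non-concave (strict via Lemma \ref{dirichlet of majorant}).
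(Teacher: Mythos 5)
Your construction differs from the paper's in the intermediate object: you cap $F$ at the fixed line $L(t)=tx_1/t_1$, whereas the paper first replaces $F$ by $G$ defined via $G(t)/t=\min_{\tau\le t}F(\tau)/\tau$ (a running minimum, which makes $G(t)/t$ nonincreasing) and only then takes the concave majorant. Your route is viable and in one respect cleaner: the slope property $\hat F'(t)\le\hat F(t)/t$ follows from the three facts $\hat F\le L$, $\hat F(t_1)=x_1$, and concavity, rather than from the paper's mediant computation. Your Step A also checks out as an excursion-by-excursion Cauchy--Schwarz (the algebra producing $\tfrac{a}{t(t-a)}(F(t)-tx_1/t_1)^2$ is correct), though it is even easier in the form \eqref{gprime}: on $\{F>L\}$ the integrand $(F^*/\tau-(F^*)')^2$ vanishes identically, so the comparison is pointwise. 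For Step B at non-touching $t\in(s_L,s_R)$, the computation you gesture at does close: replace $F^*$ on $[s_L,t]$ by its chord (this increases $g$ by Cauchy--Schwarz and equality of boundary terms), then compare two linear functions issuing from $(s_L,F^*(s_L))$ with slopes $m'\le m$, using that $\int_{s_L}^t(\ell(u)/u-\ell'(u))^2\,du=(F^*(s_L)-cs_L)^2\int_{s_L}^t u^{-2}du$ is decreasing in the slope $c$ on $(-\infty,F^*(s_L)/s_L]$; the needed bound $m=\hat F'(s_L^+)\le F^*(s_L)/s_L$ is exactly your slope property at $s_L$. This is the same mechanism as the paper's comparison of $\hat F$ with the chord $\hat G$.

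Two points need repair. First, you invoke $\hat F\ge F^*\ge 0$ to get $\hat F(t)^2/t\ge F^*(t)^2/t$; Problem \ref{relaxedstatement} does not guarantee $F\ge 0$ (even in the application a feasible $F$ may dip below $0$ between $t_1$ and $t_2$), so this step is not available in the stated generality. It is also unnecessary: at touching points the boundary terms in \eqref{defineg} cancel exactly because $\hat F(t)=F^*(t)$, and at non-touching points the linear-piece argument above is sign-free, so you should delete the global "$\hat F^2\ge (F^*)^2$" comparison entirely and route everything through the touching-point decomposition. Second, Lemma \ref{dirichlet of majorant} as stated is not strict, so in the case $F=F^*\ne\hat F$ you must add the equality analysis: on some linearity interval of $\hat F$ the function $F^*$ has the same endpoints but is not linear, so strict Cauchy--Schwarz gives $\int((F^*)')^2>\int(\hat F')^2$ there, which yields the strict inequality at $t_2$. (The paper asserts the analogous equality case with equally little detail, so this is a small addition, not a flaw in the approach.)
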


\begin{proof}
    Note that $F(t)$ is absolutely continuous if and only if $\frac{F(t)}t$ is. Define $\frac{G(t)}t = \min_{\tau \leq t} \frac{F(\tau)}{\tau}$. Then by Lemma \ref{maximumlemma}, $\frac{G(t)}t$ is absolutely continuous and is the integral of $(\frac{F(t)}{t})'\chi_{_A}$, where $A = \{ t : \frac{F(t)}t = \frac{G(t)}t \}$. It follows that $G$ is absolutely continuous and its derivative is almost everywhere equal to $F'(t)$ on $A$ and to $\frac{G(t)}t$ on $[t_1,t_2] \backslash A$.
    
    In both cases, $(\frac{G(t)}{t}-G'(t))^2 \leq (\frac{F(t)}{t}-F'(t))^2$, so for $t \in [t_1,t_2]$, we have

    \begin{equation}
        g(G,\rho,b)(t) = \int_{t_1}^t \rho - (\frac{G(t)}{t}-G'(t))^2 \geq \int_{t_1}^t \rho - (\frac{F(t)}{t}-F'(t))^2  = g(F,\rho,b),
    \end{equation}
    with equality at $t=t_2$ if and only if $G'=F'$ almost everywhere, i.e. if and only if $G=F$.

    Next, we will define $\hat F$ as the concave majorant of $G$, by \begin{equation}
        \hat F(\tau) = \sup_{s < \tau < t} (\frac{t-\tau}{t-s}G(s) + \frac{\tau-s}{t-s}G(t)).
    \end{equation}

    Note that either this supremum is attained or $\hat F(\tau) = G(\tau)$. Since $G$ was continuous, $\hat F$ is also continuous.
    
    Since it is concave, $\hat F$ is absolutely continuous. We will show that $\hat F$ inherits from $G$ the property that $\frac{\hat F(\tau)}{\tau} \geq \hat F'(\tau)$. When $\hat F \neq G$, we have $\hat F(\tau) = \frac{t-\tau}{t-s}G(s) + \frac{\tau-s}{t-s}G(t)$ and $\hat F'(\tau) = \frac{G(t)-G(s)}{t-s}$. By definition of $G$, $\frac{G(t)}t \leq \frac{G(s)}s$, so
    \begin{equation} \label{twopointnine}
        \hat F'(\tau) = \frac{G(t)-G(s)}{t-s} \leq \frac{G(t)}t \leq \frac{G(s)}s.
    \end{equation}
    since $\frac{G(t)}t$ is the mediant of the other two fractions. Then by \eqref{twopointnine} twice,
    \begin{equation}
        \frac{\hat F(\tau)}{\tau} = \frac{1}{\tau}(\frac{t-\tau}{t-s}G(s) + \frac{\tau-s}{t-s}G(t)) \geq \frac{\hat F'(\tau)}{\tau(t-s)} ((t-\tau)s + (\tau-s)t) = \hat F'(\tau).
    \end{equation}
    Next, If $\tau$ is a limit point of the set where $\hat F \neq G$, then with countably many exceptions at the jump discontinuities of $\hat F'$, we can infer the same inequality by taking limits. Otherwise, $\hat F$ is equal to $G$ in a neighborhood of $\tau$, so $\hat F'(\tau) = G'(\tau) \leq \frac{G(t)}t = \frac{\hat F(t)}t$.

    To see that $\hat F$ is feasible, we will compare $g(\hat F, \rho, b)$ and $g(G,\rho,b)$. First consider $t \in [t_1,t_2]$ with $\hat F(t)=G(t)$. By Lemma \ref{dirichlet of majorant},
    \begin{equation}
        \int_{t_1}^t -\hat F'(\tau)^2  d\tau \geq \int_{t_1}^t - G'(\tau)^2  d\tau.
    \end{equation}
    Thus $g(\hat F, \rho, b)(t) \geq g(G, \rho, b)(t)$, with equality if and only if $G$ is linear on every interval where $\hat F$ is linear, i.e. if $\hat F$ coincides with $G$ on $[t_1,t]$. In particular, if $\hat F \neq G$, then $g(\hat F, \rho, b)(t_2) > g(G, \rho, b)(t_2)$.

    Now let $\tau \in [t_1,t_2]$ with $\hat F(\tau) \neq G(\tau)$, so that $\hat F(\tau) = \frac{t-\tau}{t-s}G(s) + \frac{\tau-s}{t-s}G(t)$. Let $\hat G$ be equal to $G$ except on $[s,\tau]$, where it is replaced by a straight line from $(G(s),s)$ to $(G(\tau), \tau)$. Then clearly $g(\hat G, \rho, b)(\tau) \geq g(G, \rho, b)(\tau)$.
    Observe that for a linear function $L$, the integral $\int_s^{\tau} -(\frac{L(t')}{t'}-L'(t'))^2$ in \eqref{gprime} is a function of $L(s)$ and $L'$, quadratic in $L'$, and maximized at $L'(s) = \frac{L(s)}s$. Then since $\hat G' \leq \hat F' \leq \frac{F(s)}s$, we have $g(\hat F, \rho, b)(\tau) \geq g(\hat G, \rho, b)(\tau)$.
    
    As a result, $(\hat F, \rho)$ is feasible, and $\hat F$ is concave with $\hat F'(t) \leq \frac{\hat F(t)}{t}$ for all $t \in [t_1,t_2]$.
    
    When $F \neq \hat F$, either $G \neq F$ or $\hat F \neq G$, and in both cases we have shown that $g(F,\rho,b)(t_2) < g(\hat F, \rho, b)(t_2)$.

    Lastly, $I(F,\rho) = I(\hat F, \rho)$, since $I$ depends only on $\rho$.
        
\end{proof}

With the added control on $F$, the set of geodesics becomes compact, if their rate does not diverge to infinity:

\begin{lem} \label{uniformF}
    Suppose $F_i$ are concave and satisfy $F_i'(t) \leq \frac{F_i(t)}t$ as in Lemma \ref{concave}, with $F_i(t_1) = x_1$ and $F_i(t_2)=x_2$. Then some subsequence of the $F_i$ converges uniformly to a concave function $F$ on any interval $[t_1,t_2'], t_2'<t_2$, with $F'(t) \leq \frac{F'(t)}t$, as well as $F(t_1) = x_1$.
\end{lem}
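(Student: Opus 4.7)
The strategy is to apply Arzelà--Ascoli on each truncated interval $[t_1,t_2']$ with $t_2'<t_2$, together with a diagonal argument along a sequence $t_2^{(k)} \nearrow t_2$. So first I would establish the uniform bounds needed to apply compactness.

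For the uniform bound on values: by Lemma \ref{derivativelembad}, the hypothesis $F_i'(t)\le F_i(t)/t$ forces $F_i(t)/t$ to be nonincreasing, so $F_i(t)\le (x_1/t_1)\,t$ on $[t_1,t_2]$. Concavity combined with the endpoint values $F_i(t_1)=x_1$, $F_i(t_2)=x_2$ puts $F_i$ above the chord, giving a matching uniform lower bound. For the uniform Lipschitz bound on $[t_1,t_2']$, the upper bound on the slope follows immediately from the hypothesis: $F_i'(t)\le F_i(t)/t \le x_1/t_1$ wherever $F_i$ is differentiable, and hence the same bound holds for the one-sided derivatives by concavity. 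The lower bound on the slope comes from concavity as well: for $t\in[t_1,t_2']$,
\[
F_i'(t) \;\ge\; \frac{F_i(t_2)-F_i(t)}{t_2-t} \;=\; \frac{x_2-F_i(t)}{t_2-t} \;\ge\; \frac{x_2-(x_1/t_1)t_2'}{t_2-t_2'},
\]
which is uniform in $i$ thanks to $t_2-t\ge t_2-t_2'>0$. So the $F_i$ are uniformly bounded and uniformly Lipschitz on $[t_1,t_2']$, and Arzelà--Ascoli extracts a uniformly convergent subsequence; a standard diagonal extraction along $t_2^{(k)}\nearrow t_2$ then produces a single subsequence that converges uniformly on every $[t_1,t_2']$ with $t_2'<t_2$, to some limit $F$.

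It remains to verify that $F$ inherits the properties. Concavity and $F(t_1)=x_1$ are preserved by uniform (even pointwise) limits. Since $F_i(t)/t$ is nonincreasing in $t$ for each $i$, the pointwise limit $F(t)/t$ is also nonincreasing. The limit $F$ is concave and finite on $[t_1,t_2')$, so it is locally Lipschitz, in particular absolutely continuous on any $[t_1,t_2']$, so Lemma \ref{derivativelembad} applied in the reverse direction gives $F'(t)\le F(t)/t$.

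The argument is essentially routine once the two derivative bounds are in hand. The only mildly subtle point is uniform Lipschitz control near the left endpoint $t_1$: a concave function pinned at a boundary can a priori have infinite right-derivative there (e.g.\ $\sqrt{t-t_1}$), but the hypothesis $F_i'(t)\le F_i(t)/t$ combined with Lemma \ref{derivativelembad} rules this out and delivers the clean bound $F_i'(t)\le x_1/t_1$ throughout. Note also that one cannot expect convergence all the way up to $t_2$: the values $F_i(t_2)=x_2$ constrain the limit only through the chord bound, and the derivatives can blow up as $t\to t_2^-$, which is exactly why the statement restricts to $[t_1,t_2']$ with $t_2'<t_2$.
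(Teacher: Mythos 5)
Your argument is correct, and it reaches uniform convergence by a genuinely different mechanism than the paper. You establish two-sided equi-Lipschitz bounds on $[t_1,t_2']$ (the upper slope bound $F_i'(t)\le F_i(t)/t\le x_1/t_1$ via Lemma \ref{derivativelembad}, the lower one from concavity and the pinned value at $t_2$) and then invoke Arzel\`a--Ascoli with a diagonal extraction. The paper instead takes a diagonal subsequence converging pointwise on the rationals and upgrades to uniform convergence by exploiting monotonicity: $G_i(t)=t\frac{x_1}{t_1}-F_i(t)$ is nondecreasing precisely because $F_i'\le F_i/t\le x_1/t_1$, and pointwise convergence of monotone functions to a continuous limit, checked on a fine partition, is automatically uniform on $[t_1,t_2']$. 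The paper's route avoids ever needing a lower bound on the slopes, while yours makes the equicontinuity explicit; both isolate and resolve the one delicate point --- control of the slope near $t_1$, where a concave function pinned at an endpoint could a priori have infinite derivative --- in the same way, via the hypothesis $F_i'\le F_i/t$. One immaterial imprecision: your chain $\frac{x_2-F_i(t)}{t_2-t}\ge\frac{x_2-(x_1/t_1)t_2'}{t_2-t_2'}$ is only valid when the bounded numerator is negative (if it is nonnegative one should bound the ratio below by $0$, or divide by $t_2-t_1$ instead); since all that is needed is \emph{some} constant $-C(t_2')$ uniform in $i$, this does not affect the argument.
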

    
\begin{proof}
    Note that $F_i(t) \in [\frac{t_2-t}{t_2-t_1}x_1 + \frac{t-t_1}{t_2-t_1}x_2 ,t\frac{x_1}{t_1}]$, so by enumerating the rationals and repeatedly taking subsequence, we may assume that $F_i(q)$ converge to some $F(q)$ for every rational $q$.

    When restricted to $\mathbb Q$, $F$ must be concave as it is a limit of concave functions, and therefore can be extended continuously in its interior. It is also continuous at $t_1$ since $F_i'(t_1)$ is bounded above.
    
    Since $G_i(t) := t\frac{x_1}{t_1} - F_i(t)$ are nondecreasing functions, $G_i(r)$ must converge to $G(r)$ for any $r \in [t_1,t_2)$, where $G(r) = r\frac{x_1}{t_1} - F(r)$. On an interval $[t_1,t_2']$, for $t_2' < t_2$, we can consider convergence at the partition points 
    
    \begin{equation}
    G^{-1} (G(t_1) + 2^{-n}k(G(t_2')-G(t_1)), k \in [0,2^n] \cap \mathbb Z,
    \end{equation}
    to show that $G_i$ is eventually within $2^{-n}+\varepsilon$ of $G$ on $[t_1,t_2']$. Thus $G_i \rightarrow G$ and hence $F_i \rightarrow F$ uniformly on intervals bounded away from $t_2$. 

    The conditions on $F$ are all inherited from those on $F_i$.
\end{proof}

We introduce some more intermediate lemmas to prove the existence of a minimizing pair $(F,\rho)$.

\begin{lem}\label{rholemma}
    Suppose $(F,\rho)$ is feasible. Then there is a nonincreasing $\hat \rho$ with $(F,\hat \rho)$ feasible, and $I_{2}(F,\hat \rho) = I_{2}(F,\rho)$. Moreover, $\hat \rho$ has the same distribution as $\rho$, in the sense that $\lambda(\rho^{-1}([A,\infty))) = \lambda(\hat \rho^{-1}([A,\infty))), \forall A \geq 0$, and satisfies $\int_{t_1}^t \hat \rho(\tau) d\tau \geq \int_{t_1}^t \rho(\tau) d\tau$ for any $t \in [t_1,t_2]$.
    Here $\lambda$ denotes the Lebesgue measure.
\end{lem}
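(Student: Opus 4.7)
The plan is to take $\hat\rho$ to be the standard nonincreasing rearrangement of $\rho$ on $[t_1,t_2]$, namely
\[
\hat\rho(s) \;=\; \sup\bigl\{A \geq 0 : \lambda(\rho^{-1}([A,\infty))) \geq s - t_1\bigr\}.
\]
By construction $\hat\rho$ is nonincreasing, and it is equimeasurable with $\rho$, which is precisely the same-distribution claim of the lemma. Equimeasurability immediately gives
\[
I_{2}(F,\hat\rho) \;=\; \tfrac{4}{3}\int_{t_1}^{t_2}\hat\rho^{3/2}\,d\tau \;=\; \tfrac{4}{3}\int_{t_1}^{t_2}\rho^{3/2}\,d\tau \;=\; I_{2}(F,\rho),
\]
since the integral of any fixed measurable function of $\rho$ depends only on the distribution of $\rho$.

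The next step is the integral inequality $\int_{t_1}^t \hat\rho(\tau)\,d\tau \geq \int_{t_1}^t \rho(\tau)\,d\tau$. I would obtain this from the Hardy--Littlewood initial-segment bound: among all measurable $E \subset [t_1,t_2]$ with $\lambda(E) = t - t_1$, the integral $\int_E \rho\,d\tau$ is maximized by concentrating $E$ on the top level sets of $\rho$, and this maximum equals $\int_{t_1}^t \hat\rho(\tau)\,d\tau$ by equimeasurability. Specializing to $E = [t_1,t]$ gives the claim. This is the only mildly technical step in the argument, and it is entirely standard.

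Finally, to verify that $(F,\hat\rho)$ is feasible, observe that $F$ is untouched, so the boundary values $F(t_1)=x_1$, $F(t_2)=x_2$ and all Dirichlet-type terms in \eqref{defineg} are preserved; only the term $\int_{t_1}^t \rho(\tau)\,d\tau$ is replaced by $\int_{t_1}^t \hat\rho(\tau)\,d\tau$, which is at least as large by the inequality just established. Hence
\[
g(F,\hat\rho,b)(t) \;\geq\; g(F,\rho,b)(t) \;\geq\; 0 \qquad \text{for every } t \in [t_1,t_2],
\]
so $(F,\hat\rho)$ is feasible. Combining these three observations yields the lemma; I do not expect any serious obstacle beyond invoking the Hardy--Littlewood estimate for the cumulative integral.
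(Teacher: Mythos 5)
Your proposal is correct and follows essentially the same route as the paper: both take $\hat\rho$ to be the nonincreasing rearrangement of $\rho$, use equimeasurability for the $I_2$ identity, establish the initial-segment inequality $\int_{t_1}^t \hat\rho \geq \int_{t_1}^t \rho$ (the paper proves this Hardy--Littlewood-type bound directly via the layer-cake formula rather than citing it), and conclude feasibility from the fact that $g$ depends on $\rho$ only through $\int_{t_1}^t \rho$.
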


\begin{proof}
    We define $\hat \rho$ to be the nonincreasing sorted version of $\rho$. In other words, for $\rho: [t_1,t_2] \rightarrow \mathbb R^+$, define

    \begin{equation}
        \nu(A) = \lambda(\rho^{-1}([A,\infty)))
    \end{equation}

    \begin{equation}
        \hat \rho(t) = \inf_{\nu(A) \geq t-t_1} A
    \end{equation}

    It is easy to see that $\hat \rho$ is nonincreasing and has the same distribution as $\rho$.

    Then for any $t \in [t_1,t_2]$, we have

    \begin{equation} \label{intrholem}
        \int_{t_1}^t \rho(\tau) d\tau = \int_0^{\infty} \lambda(\rho^{-1}([A,\infty)) \cap [t_1,t]) dA.
    \end{equation}

    Since $\lambda(\rho^{-1}([A,\infty)) \cap [t_1,t]) \leq \min (\lambda(\rho^{-1}([A,\infty)), \lambda([t_1,t]))$, and since the latter term is just $\lambda(\hat \rho^{-1}([A,\infty)) \cap [t_1,t])$, the integral in \eqref{intrholem} is bounded above by 

    \begin{equation} 
        \int_0^{\infty} \lambda(\hat \rho^{-1}([A,\infty)) \cap [t_1,t]) dA = \int_{t_1}^t \hat \rho(\tau) d\tau.
    \end{equation}
     
    Then 
    \begin{equation}
        g(F,\hat \rho,b)(t) - g(F,\rho,b)(t) = \int_{t_1}^t (\hat \rho - \rho) \geq 0,
    \end{equation}
    so $g(F,\hat \rho,b) \geq g(F, \rho,b) \geq 0$, and
    $I_{2}(F,\rho) = \frac 43 \int \rho^{\frac 32} =  I_{2}(F,\hat \rho)$. 
    
\end{proof}

\begin{lem} \label{dirichlet semicontinuity}
    Suppose $F_i:[A,B] \rightarrow \mathbb R$ are absolutely continuous functions with $\int_A^B F_i^2 < \infty$, converging uniformly to $F$. Then \begin{equation}
        \int_A^B F'^2 \leq \liminf_n \int_A^B F_n'^2.
    \end{equation}
\end{lem}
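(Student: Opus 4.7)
The plan is to exploit the variational characterization of the Dirichlet energy that was already invoked in the proof of Lemma \ref{dirichlet of majorant}: for an absolutely continuous function $F$ on $[A,B]$,
\begin{equation}
\int_A^B F'(\tau)^2\,d\tau = \sup_P \sum_{i=1}^{n-1} \frac{(F(\tau_{i+1})-F(\tau_i))^2}{\tau_{i+1}-\tau_i},
\end{equation}
the supremum being taken over partitions $P=\{A=\tau_1<\cdots<\tau_n=B\}$. This is a supremum of functionals each of which is continuous in $F$ under uniform convergence, since each one reads off only finitely many point values of $F$. Lower semicontinuity of a supremum of continuous functionals is automatic, so the strategy reduces to verifying this characterization for the limit and then passing the liminf through the partition sum.

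First I would dispose of the trivial case $\liminf_n \int_A^B F_n'^2 = \infty$. Otherwise, pass to a subsequence attaining the liminf and with $\int_A^B F_n'^2 \le M$ for some $M<\infty$. For any fixed partition $P$, Cauchy-Schwarz on each sub-interval gives
\begin{equation}
\sum_{i} \frac{(F_n(\tau_{i+1})-F_n(\tau_i))^2}{\tau_{i+1}-\tau_i} \le \int_A^B F_n'(\tau)^2\,d\tau \le M,
\end{equation}
and because $F_n\to F$ uniformly, the left-hand side converges as $n\to\infty$ to the analogous partition sum for $F$. Taking the supremum over $P$ of the resulting bound shows that $\sup_P \sum_i (F(\tau_{i+1})-F(\tau_i))^2/(\tau_{i+1}-\tau_i) \le M$, which in particular implies (via the Dembo–Zeitouni lemma cited in the excerpt) that $F$ is absolutely continuous and $F'\in L^2$, with $\int_A^B F'^2$ equal to that supremum.

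Finally, for each fixed partition $P$, uniform convergence gives
\begin{equation}
\sum_i \frac{(F(\tau_{i+1})-F(\tau_i))^2}{\tau_{i+1}-\tau_i} = \lim_n \sum_i \frac{(F_n(\tau_{i+1})-F_n(\tau_i))^2}{\tau_{i+1}-\tau_i} \le \liminf_n \int_A^B F_n'(\tau)^2\,d\tau,
\end{equation}
where the last inequality is the partition-sum bound applied to each $F_n$. Taking the supremum over $P$ on the left yields $\int_A^B F'^2 \le \liminf_n \int_A^B F_n'^2$, as required.

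The only genuine subtlety is the identification of the limit function's derivative with its distributional derivative; but this is precisely what the supremum-of-partitions characterization delivers, so there is no serious obstacle beyond carefully quoting that characterization. The argument is in fact the same lower-semicontinuity principle that makes $I$ a good rate function for the directed landscape.
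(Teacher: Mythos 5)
Your proof is correct and follows essentially the same route as the paper: both rely on the characterization of the Dirichlet energy as a supremum over partitions of the quadratic difference quotients, note that each partition sum is continuous under uniform convergence and bounded by the energy, and conclude by taking the supremum over partitions. The extra care you take with the Cauchy--Schwarz bound and the identification of the limit's derivative is a harmless elaboration of the same argument.
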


\begin{proof}
    The Dirichlet norm of $F$, $\int_A^B F^2$, is also given by the supremum over partitions $P$ of $[A,B]$ with breaks at $a=t_0,t_1, \ldots, t_n=B$ of $d_P(F)$, where
    \begin{equation}
        d_P(F) = \sum_{i=1}^n \frac{(F(t_i)-F(t_{i-1}))^2}{t_i-t_{i-1}}.
    \end{equation}
    For any given $P$, we can choose $N$ such that $||F-F_n||_u$ is sufficiently small for $n>N$, and then $d_P(F_n)$ can be made arbitrarily close to $d_P(F)$. Since $\liminf_n \int_A^B F_n^2 \geq \liminf_nd_P(F_n)$, we have $d_P(F) \leq \liminf_n \int_A^B F_n^2$, so the result follows by taking the supremum over all partitions $P$.
\end{proof}

\begin{lem} \label{rhoconverge}
    If $\{ \rho_i \}_{i=1}^{\infty}$ is a sequence of nonincreasing, nonnegative functions on $[A,B]$ with $\int_A^B \rho_i^{\frac 32} < M$, then it admits a subsequence that converges in $L^1$ to some $\rho$, with $\int_A^B \rho^{\frac 32} \leq \liminf \int_A^B \rho_i^{\frac 32}$.
\end{lem}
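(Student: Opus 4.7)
The plan is to combine Helly's selection theorem (for pointwise convergence of monotone sequences) with the $L^{3/2}$ bound (to get uniform integrability and Fatou for the lower semicontinuity). The starting observation is that because $\rho_i$ is nonincreasing, for any $t \in (A,B]$,
\[
\rho_i(t)^{3/2}(t-A) \leq \int_A^t \rho_i(\tau)^{3/2}\, d\tau \leq M,
\]
so $\rho_i(t) \leq (M/(t-A))^{2/3}$. In particular, on each interval $[A+\delta, B]$ the sequence $\{\rho_i\}$ is uniformly bounded.

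First, I would apply Helly's selection theorem on $[A+1/n, B]$ for each positive integer $n$ and extract a diagonal subsequence (still denoted $\rho_i$) that converges pointwise on $(A,B]$ to some nonincreasing function $\rho:(A,B]\to [0,\infty)$, except possibly at the countable set of jump points of $\rho$. Extend $\rho$ to $A$ arbitrarily. The pointwise bound passes to $\rho(t) \leq (M/(t-A))^{2/3}$, so $\rho$ is finite on $(A,B]$ and in particular measurable.

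Next, I would upgrade a.e. convergence to $L^1$ convergence using Vitali's theorem. Uniform integrability is immediate from H\"older's inequality: for any measurable $E \subseteq [A,B]$,
\[
\int_E \rho_i\, d\tau \leq \lambda(E)^{1/3}\Bigl(\int_E \rho_i^{3/2}\,d\tau\Bigr)^{2/3} \leq \lambda(E)^{1/3} M^{2/3}.
\]
This controls the mass near $A$ (the only place where the pointwise bound fails) uniformly in $i$, and more generally gives absolute equicontinuity of the integrals. Combined with a.e.\ convergence on $(A,B]$, Vitali's theorem yields $\rho_i \to \rho$ in $L^1([A,B])$.

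Finally, since $\rho_i^{3/2} \to \rho^{3/2}$ pointwise a.e.\ and all integrands are nonnegative, Fatou's lemma gives
\[
\int_A^B \rho^{3/2}\, d\tau \leq \liminf_{i\to\infty}\int_A^B \rho_i^{3/2}\, d\tau,
\]
completing the proof. The only subtle point is the behavior near the endpoint $A$, where the pointwise bound degenerates, but the $L^{3/2}$ bound via H\"older prevents any escape of mass and makes Vitali's theorem applicable.
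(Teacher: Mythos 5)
Your proof is correct, and it reaches the same conclusion by a somewhat different set of standard tools. The paper also starts from the bound $\rho_i(A+\varepsilon)\le L/\varepsilon$ and also handles the neighborhood of $A$ by H\"older, but in the interior it proceeds by hand: it writes $\rho_i(t)=\rho_i(A+\varepsilon)-\mu_i([A+\varepsilon,t])$ for uniformly bounded positive measures $\mu_i$, extracts a weakly convergent subsequence, and then computes $\int_{A+\varepsilon}^B|\rho-\rho_i|$ directly via Fubini to get $L^1$ convergence; for the lower semicontinuity of $\int\rho^{3/2}$ it invokes the fact that the unit ball of $L^{3/2}$ is closed in $L^1$. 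You instead cite Helly's selection theorem (which is essentially what the measure-theoretic extraction proves) to get pointwise a.e.\ convergence, use Vitali's convergence theorem with the H\"older uniform-integrability estimate $\int_E\rho_i\le\lambda(E)^{1/3}M^{2/3}$ to upgrade to $L^1$, and then get the liminf inequality from Fatou applied to $\rho_i^{3/2}\to\rho^{3/2}$ a.e. Your route is arguably cleaner at the last step: Fatou uses only the a.e.\ convergence you already have, whereas the paper's appeal to weak closedness of $L^q$ balls requires the $L^1$ convergence as input. The paper's route is more self-contained in that it does not assume Helly or Vitali as black boxes. Both arguments are complete; the one point worth stating explicitly in yours is that a.e.\ convergence on $(A,B]$ is a.e.\ convergence on $[A,B]$ since $\{A\}$ is null, so Vitali and Fatou apply on the whole interval.
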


\begin{proof}
    Note that $\int_A^B \rho_i^{\frac 32} < M$ implies that $\int_A^B \rho_i < L := (B-A)^{\frac 13}M^{\frac 23}$ by Holder's inequality.
    For $\varepsilon>0$, we must have $\rho_i(A+\varepsilon) < \frac L {\varepsilon}$. Then on the interval $[A+\varepsilon, B]$, we have $\rho_i(t) = \rho_i(A+\varepsilon) - \mu_i([A+\varepsilon,t])$ for some positive, uniformly bounded measures $\mu_i$. By passing to a subsequence, we may assume that $\rho_i(A+\varepsilon)$ converge and that $\mu_i$ converge weakly. Then
    \begin{equation}
    \begin{aligned}
        \int_{A+\varepsilon}^B |\rho(t) - \rho_i(t)| &\leq (B-A)(\rho(A+\varepsilon)-\rho_i(A+\varepsilon)) \\ &+ \int_{A+\varepsilon}^B |\mu([A+\varepsilon,t]) - \mu_i([A+\varepsilon,t])| dt
    \end{aligned}
    \end{equation}

    The first term converges to zero, and the integral term is, by Fubini's theorem, equal to
    
    \begin{equation}
        \int_{A+\varepsilon}^B \int_{A+\varepsilon}^t 1 d(\mu-\mu_i)(\tau) dt = \int_{A+\varepsilon}^B (B-\tau) d(\mu -\mu_i)(\tau).
    \end{equation}
    
    This goes to zero since $\mu_i \rightarrow \mu$, so $\rho_i$ converge to $\rho$ in $L^1$ on $(A+\varepsilon, B]$.
    
    By Holder's inequality, $\int_A^{A+\varepsilon} \rho(\tau) d\tau \leq \varepsilon^{\frac 13}M^{\frac 23}$, so $\rho_i \rightarrow \rho$ in $L^1$ on $[A,B]$. Since the unit ball of $L^q([A,B])$ is closed in $L^p([A,B])$ for $p<q$, we have $\int_A^B \rho^{\frac 32}(\tau) d\tau \leq \liminf \int_A^B \rho_i^{\frac 32}(\tau) d\tau$.  
\end{proof}

\begin{lem}\label{optimizer}
    There exists a feasible pair $(F,\rho)$ with $I_{2}(F,\rho) = \inf_{\hat F, \hat \rho} I_{2}(\hat F, \hat \rho)$, where the infimum is taken over all feasible $(\hat F,\hat \rho)$.
\end{lem}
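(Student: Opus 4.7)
The plan is a standard direct-method argument: take a minimizing sequence of feasible pairs $(F_i,\rho_i)$ with $I_2(F_i,\rho_i)\to \inf I_2=:I^*$, apply the reductions from Lemmas \ref{concave} and \ref{rholemma} so that without loss of generality each $F_i$ is concave with $F_i'(t)\le F_i(t)/t$ and each $\rho_i$ is nonincreasing (neither operation increases $I_2$ nor destroys feasibility), then extract a convergent subsequence and identify the limit as a minimizer. Since $\int \rho_i^{3/2}$ is bounded along this sequence, H\"older's inequality gives a uniform $L^1$ bound on $\rho_i$, and feasibility $g(F_i,\rho_i,b)(t_2)\ge 0$ bounds $\int_{t_1}^{t_2}(F_i/\tau-F_i')^2$.

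For the extraction I would apply Lemma \ref{uniformF} to pass to a subsequence for which $F_i\to F$ uniformly on every $[t_1,t_2']$ with $t_2'<t_2$, and then apply Lemma \ref{rhoconverge} to extract a further subsequence with $\rho_i\to \rho$ in $L^1([t_1,t_2])$ and $\int\rho^{3/2}\le \liminf \int\rho_i^{3/2}$. The limit $F$ is automatically concave on $[t_1,t_2)$ with $F(t_1)=x_1$ and $F'(t)\le F(t)/t$.

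The main obstacle is that Lemma \ref{uniformF} does not provide convergence up to $t_2$, so I must show $F(t_2^-)=x_2$ and then verify absolute continuity on $[t_1,t_2]$. My approach is to combine the elementary identity (integration by parts)
\begin{equation}
\int_{s}^{t}\bigl(F_i/\tau-F_i'\bigr)^{2}d\tau=\int_{s}^{t}F_i'^{2}\,d\tau+\frac{F_i(s)^2}{s}-\frac{F_i(t)^2}{t}
\end{equation}
with the uniform bound $\int_{t_1}^{t_2}(F_i/\tau-F_i')^2\le b+\int \rho_i$ to obtain a uniform bound on $\int_{t_1}^{t_2} F_i'^{2}$. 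Cauchy--Schwarz then gives $(x_2-F_i(t_2'))^2\le C(t_2-t_2')$ for some $C$ independent of $i$, so taking $i\to\infty$ first and $t_2'\to t_2^-$ second forces $F(t_2^-)=x_2$. Setting $F(t_2):=x_2$ yields a continuous concave function on $[t_1,t_2]$, which is therefore absolutely continuous.

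Finally, to check feasibility and minimality, the same integration-by-parts identity combined with Lemma \ref{dirichlet semicontinuity} applied on $[t_1,t]$ shows that the Dirichlet-type term $\int_{t_1}^t(F/\tau-F')^2$ is lower semicontinuous along the subsequence; together with $L^1$ convergence of $\rho_i$, this yields $g(F,\rho,b)(t)\ge \limsup_i g(F_i,\rho_i,b)(t)\ge 0$ for every $t<t_2$, with the endpoint $t=t_2$ following by continuity. Minimality is immediate from $I_2(F,\rho)=\tfrac{4}{3}\int\rho^{3/2}\le\liminf\tfrac{4}{3}\int \rho_i^{3/2}=I^*$, completing the proof.
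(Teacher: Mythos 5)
Your proposal is correct and follows essentially the same route as the paper: reduce via Lemmas \ref{concave} and \ref{rholemma}, extract subsequences with Lemmas \ref{uniformF} and \ref{rhoconverge}, control the endpoint $t_2$ by a uniform Dirichlet bound plus Cauchy--Schwarz, and conclude feasibility and minimality from Lemma \ref{dirichlet semicontinuity} and the lower semicontinuity of $\int\rho^{3/2}$. The only (trivial) point the paper makes explicit that you omit is that the feasible set is nonempty, so that a minimizing sequence exists.
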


\begin{proof}

    Observe that the set of feasible $(\hat F, \hat \rho)$ is nonempty: if $\hat F$ is the straight line connecting $(x_1,t_1)$ to $(x_2,t_2)$, and $\hat \rho = \frac{x_1^2}{t_1^2} + \frac{x_2^2}{t_2^2}$ on $[t_1,t_2]$, then $(\hat F,\hat \rho)$ is feasible. By Lemma \ref{concave}, there exist feasible pairs $(F_i,\rho_i)_{i=1}^{\infty}$ with $F_i$ concave and $F_i'(t) \leq \frac{F_i(t)}t \forall t \in [t_1,t_2]$, such that $\lim_{i \rightarrow \infty} I_2(F_i,\rho_i) = \inf_{\hat F, \hat \rho} I_{2}(\hat F, \hat \rho)$. By Lemma \ref{rholemma}, we may also assume that each $\rho_i$ is nonincreasing. We will find a converging subsequence whose rate will therefore be minimal. 
    
    Let $M = \max_i I_2(F_i,\rho_i)$, so that $\int_{t_1}^{t_2} \rho_i(\tau)^{\frac 32} d\tau \leq \frac 34 M$. Then $\int_{t_1}^{t_2} \rho(\tau) d\tau \leq (t_2-t_1)^{\frac 13} (\frac 34 M)^{\frac 23}$ by Holder's inequality. Then for $D := (t_2-t_1)^{\frac 13} (\frac 34 M)^{\frac 23} + b$, we must have $\int_{t_1}^{t_2} F_i'(\tau)^2 d\tau \leq D$.

    By Lemma \ref{uniformF}, by passing to a subsequence we may assume that $F_i \rightarrow F$ uniformly on sets bounded away from $t_2$. Moreover, for $\varepsilon>0$, if $\delta = \frac{\varepsilon^2}D$, we must have $|F_i(t) - x_2| \leq \varepsilon$ whenever $|t-t_2| \leq \delta$. Thus $F_i$ converge uniformly to $F$.

    Similarly, by Lemma \ref{rhoconverge}, by passing to a subsequence again we may assume that $\rho_i \rightarrow \rho$ in $L^1$ and that $\int \rho^{\frac 32} = \frac 34 \inf_{\hat F, \hat \rho} I_{2}(\hat F, \hat \rho)$.

    It follows by Lemma \ref{dirichlet semicontinuity} that $g(F,\rho,b)(t) \geq \limsup g(F_i,\rho_i,b)(t) \geq 0$, so $(F,\rho)$ is feasible, and moreover that 
    
    \begin{equation}
        I_{2}(F,\rho) = \frac 43 \int_{t_1}^{t_2}\rho^{\frac 32}(\tau) d \tau = \lim_{i \rightarrow \infty} \frac 43\int_{t_1}^{t_2}\rho_i^{\frac 32}(\tau) d \tau = \inf_{\hat F, \hat \rho} I_{2}(\hat F, \hat \rho).
    \end{equation}

\end{proof}

\begin{lem}\label{tightg}
    Suppose $(F,\rho)$ is feasible and minimizes $I_{2}$. Then $g(F,\rho,b)(t_2)=0$. In particular, $F$ is concave and $ F'(t) \leq \frac{ F(t)}t$ for all $t \in [t_1,t_2]$.
\end{lem}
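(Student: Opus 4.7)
I would argue by contradiction: suppose $(F,\rho)$ minimizes $I_2$ but $\eta := g(F,\rho,b)(t_2) > 0$. The plan is to shave a small constant $c$ off $\rho$ on a carefully chosen positive-measure set $A$, producing a feasible pair $(F,\tilde\rho)$ with $I_2(F,\tilde\rho) < I_2(F,\rho)$ and contradicting minimality.

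The crux is locating $A$. Let $t_{*} := \sup\{t \in [t_1,t_2] : g(F,\rho,b)(t) = 0\}$, with $t_{*} := t_1$ if this set is empty. Continuity of $g$ and $g(t_2) > 0$ force $t_{*} < t_2$ and $g > 0$ on $(t_{*}, t_2]$, so $g \geq m > 0$ on $[t_{*}+\delta, t_2]$ for every small $\delta > 0$. I next argue that $\rho > 0$ on a positive-measure subset of $(t_{*}, t_2]$. If $g(t_{*}) = 0$, formula \eqref{gprime} directly gives $\int_{t_{*}}^{t_2}\rho \,d\tau \geq g(t_2) - g(t_{*}) = \eta > 0$. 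Otherwise $t_{*} = t_1$ with $g > 0$ throughout, and $\rho \equiv 0$ a.e.\ would force $\int_{t_1}^{t_2}(F/\tau - F')^2\,d\tau \leq b$ by feasibility; but Euler--Lagrange identifies the straight line as the unique minimizer of this integral, with minimum value equal to the upper bound on $b$ in Problem \ref{relaxedstatement}, so that $g(t_2) \leq 0$, contradicting $\eta > 0$. Shrinking $\delta$ if necessary, I extract a compact $A \subset [t_{*}+\delta, t_2]$ with $|A|>0$ and $\varepsilon > 0$ such that $\rho \geq \varepsilon$ on $A$.

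Now set $\tilde\rho := \rho - c\chi_A$ with $0 < c < \min(\varepsilon, m/|A|)$. Then $\tilde\rho \geq 0$, and the identity
\[
g(F,\tilde\rho,b)(t) = g(F,\rho,b)(t) - c\,|A \cap [t_1,t]|
\]
shows $g(F,\tilde\rho,b)(t) = g(F,\rho,b)(t)$ on $[t_1, t_{*}+\delta]$ and $\geq m - c|A| \geq 0$ on $[t_{*}+\delta, t_2]$, so $(F,\tilde\rho)$ is feasible. Since $x \mapsto x^{3/2}$ is strictly increasing and $\rho \geq c$ on $A$ of positive measure,
\[
I_2(F,\tilde\rho) - I_2(F,\rho) = \tfrac{4}{3}\int_A\bigl[(\rho-c)^{3/2} - \rho^{3/2}\bigr]\,dt < 0,
\]
contradicting minimality. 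Hence $g(F,\rho,b)(t_2) = 0$.

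For the "in particular" conclusion, suppose $F$ is not already concave with $F'(t) \leq F(t)/t$. Lemma \ref{concave} then produces a distinct feasible $(\hat F, \rho)$ with $I_2(\hat F,\rho) = I_2(F,\rho)$ (hence also a minimizer) but $g(\hat F,\rho,b)(t_2) > g(F,\rho,b)(t_2) = 0$. Applying the first part of this lemma to the minimizer $(\hat F,\rho)$ forces $g(\hat F,\rho,b)(t_2) = 0$, a contradiction. The main obstacle throughout is the scenario where $\rho$ vanishes in a neighborhood of $t_2$; the last-zero construction circumvents this by pinpointing a subinterval of $[t_1, t_2]$ on which $g$ has uniform positive slack and $\rho$ is simultaneously bounded below.
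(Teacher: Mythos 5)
Your proof is correct and follows essentially the same strategy as the paper: perturb $\rho$ downward to contradict minimality while preserving feasibility, handle the degenerate case $\rho \equiv 0$ a.e.\ via the upper bound on $b$ (which forces $g(t_2) \leq 0$ for the straight line), and deduce the concavity claim from Lemma \ref{concave} exactly as the paper does. The only difference is in the implementation of the perturbation: the paper truncates $\rho$ to zero on $[t,t_2]$ for a $t$ chosen with $0 < \int_t^{t_2}\rho < g(t_2)$, using that the modified $g$ is then nonincreasing past $t$, whereas you shave a small constant off $\rho$ on a positive-measure set where $g$ is uniformly bounded below; both versions are valid.
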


\begin{proof}
    Since $b \leq \frac{(x_2-x_1)^2}{t_2-t_1} + \frac{x_1^2}{t_1} - \frac{x_2^2}{t_2}$, if $\rho$ is zero almost everywhere, then $b = \frac{(x_2-x_1)^2}{t_2-t_1} + \frac{x_1^2}{t_1} - \frac{x_2^2}{t_2}$ and $F$ is linear on $[t_1,t_2]$ and we compute $g(t_2)=0$. Otherwise, $\int \rho > 0$, and suppose for the sake of contradiction that $g(t_2)>0$. The integral $\int_t^{t_2} \rho(\tau)d\tau$ is a continuous function of $t$ decreasing to zero, so fix $t$ sufficiently close to $t_2$ so that $0 < \int_t^{t_2} \rho(\tau)d\tau < g(t_2)$. From \eqref{gprime}, since $g'(\tau) \leq \rho(\tau)$, we will show that $(F,\hat \rho := \chi_{_{[t_1,t]}} \rho)$ is feasible: if $\hat g = g(F,\hat \rho,b)$, then we have $\hat g(t') = g(t')$ for $t' \leq t$, and for $t' \in [t,t_2]$,
    
    \begin{equation}
        \hat g(t') \geq \hat g(t_2) = g(t_2) - \int_t^{t_2}\rho(\tau)d\tau > 0,
    \end{equation}
    
    so $(F,\hat \rho)$ is feasible. But then $I_{2}(F,\hat \rho) < I_2(F,\rho)$, contradicting minimality of $I_{2}(F,\rho)$.

    If $F$ is not of the stated form, then by Lemma \ref{concave} there exists $\hat F$ with $(\hat F, \rho)$ also minimizing $I_{2}$ such that $g(\hat F, \rho, b)(t_2) > g(F, \rho, b)(t_2) \geq 0$, which is a contradiction.

\end{proof}

From this point forth, we will assume that $(F,\rho)$ is a feasible pair minimizing $I_{2}$ and satisfying the conclusion of Lemma \ref{concave}, and that $g$ is the function arising from the definition in $\eqref{defineg}$. We will also assume by Lemma \ref{rholemma} that $\rho$ is nonincreasing.

\begin{lem} \label{line}
    Suppose for some interval $(p,q)  \subset (t_1,t_2) $, $g(t)>0, \forall t \in (p,q)$. Then $F$ is linear and $\rho$ is constant on $(p,q)$.
\end{lem}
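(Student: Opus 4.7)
The plan is a local first-variation argument on subintervals $[a,b]\subset(p,q)$ where $g$ is bounded below by a positive constant, combined with a covering argument to extend the conclusion to all of $(p,q)$. By continuity and strict positivity of $g$ on $(p,q)$, every point of $(p,q)$ lies in some such $[a,b]$ with $g\geq\epsilon>0$ on $[a,b]$.

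For the main variational step, I consider perturbations $(F_s,\rho_s)=(F+sf,\,\rho+sh+s^2\eta\chi_{[a,b]})$ where $f$ is absolutely continuous and $h$ is measurable, both supported in $(a,b)$ with $f(a)=f(b)=0$, subject to the balance
\[
\int_a^b h(\tau)\,d\tau=2\int_a^b F'(\tau)f'(\tau)\,d\tau
\]
and the quadratic correction $\eta=(b-a)^{-1}\int_a^b f'(\tau)^2\,d\tau$. A direct computation shows that together these conditions make $g_s(t)=g(t)$ hold exactly for $t\in[b,t_2]$ (not merely to first order in $s$), while on $[a,b]$ one has $g_s=g+O(s)\geq\epsilon/2$ for $|s|$ small. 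Hence $(F_s,\rho_s)$ is feasible for both signs of small $s$. Minimality of $(F,\rho)$ then forces
\[
0=\left.\frac{d}{ds}\right|_{s=0}I_2(F_s,\rho_s)=2\int_a^b \rho(\tau)^{1/2}h(\tau)\,d\tau.
\]
Setting $f=0$ collapses the balance to $\int h=0$, and the fundamental lemma of the calculus of variations forces $\rho^{1/2}$ to be a.e.\ equal to some constant $c\geq 0$ on $[a,b]$. Plugging this back in, the optimality condition becomes $c\int_a^b F'(\tau)f'(\tau)\,d\tau=0$ for every admissible $f$; when $c>0$, varying $f$ forces $F'$ a.e.\ constant on $[a,b]$, so $F$ is linear there.

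The remaining case is $c=0$, i.e., $\rho\equiv 0$ on $[a,b]$. Since $\rho$ is nonincreasing by Lemma \ref{rholemma}, in fact $\rho\equiv 0$ on $[a,t_2]$. Let $F^*$ be the linear interpolation on $[a,b]$ between $(a,F(a))$ and $(b,F(b))$, extended by $F$ elsewhere, and take $\rho^*=\rho$. Because $\rho$ vanishes on $[a,t_2]$, the function $g(F^*,\rho^*,b)$ is monotone nonincreasing there, starting from $g(a)\geq\epsilon>0$ and ending at
\[
g(F^*,\rho^*,b)(t_2)=g(F,\rho,b)(t_2)+\int_a^b\bigl(F'(\tau)^2-F^{*\prime}(\tau)^2\bigr)\,d\tau,
\]
which is $\geq 0$ by Cauchy--Schwarz, with equality iff $F$ is already linear on $[a,b]$. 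Thus $(F^*,\rho^*)$ is feasible with $I_2(F^*,\rho^*)=I_2(F,\rho)$, so $(F^*,\rho^*)$ is also a minimizer; then Lemma \ref{tightg} forces $g(F^*,\rho^*,b)(t_2)=0=g(F,\rho,b)(t_2)$, making the displayed equality hold and forcing $F$ linear on $[a,b]$ in this case too.

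Finally, since these conclusions hold on every $[a,b]$ of the above type and any two linear (respectively constant) pieces agreeing on an open overlap must coincide, covering $(p,q)$ by such subintervals extends $F$ linear and $\rho$ constant to all of $(p,q)$. The main obstacle in the argument is ensuring feasibility of the perturbation at second order in $s$ near $t_2$, where $g$ vanishes by Lemma \ref{tightg}; the quadratic correction $s^2\eta\chi_{[a,b]}$ is designed precisely to cancel the second-order perturbation of $g$ on $[b,t_2]$, so that the variation leaves $g$ exactly unchanged on $[b,t_2]$ and feasibility costs nothing at $t_2$.
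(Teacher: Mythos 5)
Your argument is essentially correct and takes a genuinely different route from the paper. The paper proves linearity of $F$ by a local chord replacement: it replaces $F$ on a small interval by the straight line, checks feasibility by choosing the interval small enough that the loss in $g$ is dominated by $g(t)>0$, and derives a contradiction from $g(t_2)>0$ via Lemma \ref{tightg}; constancy of $\rho$ comes from partially averaging $\rho$ on the interval, which strictly lowers $I_2$ by convexity while preserving $\int\rho$ and hence feasibility. You instead run a genuine first-variation (Euler--Lagrange) argument, and the key device --- choosing $h$ and the quadratic correction $\eta$ so that $g_s$ is \emph{exactly} unchanged on $[b,t_2]$, not just to first order --- is correct (the $s^2$ terms cancel because $\eta(b-a)=\int_a^b f'^2$, and the $s$ terms cancel by the balance condition, using the form \eqref{defineg} of $g$ with $F_s=F$ at $b$ and $t_2$). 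This neatly sidesteps the feasibility problem at $t_2$ where $g$ vanishes, which is exactly what forces the paper into its more hands-on replacement arguments. Your treatment of the degenerate case $\rho\equiv0$ is essentially the paper's chord-replacement trick combined with Lemma \ref{tightg}, and it is correct.

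One step needs repair: the application of the fundamental lemma to conclude that $\rho^{1/2}$ is a.e.\ constant on $[a,b]$. The two-sided variation in $s$ requires $\rho_s=\rho+sh\geq0$ for both signs of small $s$, which fails wherever $\rho=0$ and $h\neq0$; so you may not take arbitrary $h$ with $\int h=0$, and as written the argument does not exclude the possibility that $\rho$ equals a positive constant on part of $[a,b]$ and zero on the rest (which, since $\rho$ is nonincreasing, would be a terminal subinterval $[z,b]$). The fix is routine but should be stated: for $h$ supported in $\{\rho\geq\delta\}$ the two-sided variation is legitimate and gives constancy of $\rho^{1/2}$ on $\{\rho\geq\delta\}\cap[a,b]$ for every $\delta>0$; and a one-sided variation ($s>0$ only, with $h\geq0$ on $\{\rho=0\}$) moving mass from $\{\rho\geq\delta\}$ onto $\{\rho=0\}$ gives $\frac{d}{ds}I_2\big|_{0^+}=2\int\rho^{1/2}h<0$ unless one of the two sets is null, which rules out the mixed case. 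With that patch, together with the observation that every compact subinterval of $(p,q)$ has $g$ bounded below by a positive constant (so the covering step is fine), the proof goes through.
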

\begin{proof}
    Fix $t \in (p,q)$.
    Pick $\varepsilon < \min( \frac{t_2-t}2, \frac{g(t)}4 (\frac{\max F}{t_1} - \min_{t' < \frac{t+t_2}2} F'(t'))^{-2})$, sufficiently small so that $\int_{t-\varepsilon}^t \rho(\tau) d\tau < \frac{g(t)}2$. Suppose $F$ is not linear on $[t-\varepsilon, t+\varepsilon]$. Let $G=F$ outside of $(t-\varepsilon,t+\varepsilon)$, and define $G$ linearly on $[t-\varepsilon,t+\varepsilon]$. For $t \in (t-\varepsilon, t+\varepsilon)$, we have

    \begin{equation} \label{thistemponehere}
        g(G,\rho,b)(t') \geq g(F,\rho,b)(t') - \int_{t-\varepsilon}^{t'} (\frac{G(\tau)}{\tau} - G'(\tau))^2 d\tau
    \end{equation}

    Observe that $\frac{G(\tau)}{\tau} \geq G'(\tau)$ on $(t-\varepsilon,t+\varepsilon)$: $G'(\tau)$ is constant, so we just need to show that $G$ does not cross the parallel line $x=\tau G'$ on this interval. But by Lemma \ref{concave}, 

    \begin{equation}
    G(t-\varepsilon) =F(t-\varepsilon) \geq (t-\varepsilon)F'(t-\varepsilon)\geq(t-\varepsilon)G'.    
    \end{equation}
    
    Next, since $(-\frac{F(t)}t - F'(t))^2 \leq g'(t) \leq \rho(t)$ by \eqref{gprime}, equation \eqref{thistemponehere} is bounded below by

    \begin{equation}
    g(F,\rho,b)(t) - \int_{t-\varepsilon}^t\rho(\tau)d\tau - \int_{t-\varepsilon}^{t+\varepsilon}(\frac{G(\tau)}{\tau} - G'(\tau))^2 d\tau >g(t)-\frac{g(t)}2-\frac{g(t)}2=0,
    \end{equation}
    
    where the inequality on the second integral comes from taking an upper and lower bound on $G$ and $G'$, as in the choice of $\varepsilon$. By choice of $\varepsilon$, we have $g(G,\rho,b)(t')>0$ for $t' \in (t-\varepsilon,t+\varepsilon)$, and since the straight line has the shortest Dirichlet length, $g(G,\rho,b)(t')>g(F,\rho,b)(t')$ for $t' > t+\varepsilon$. But then $(G,\rho)$ is feasible and minimizes $I_2$, while $g(G,\rho,b)(t_2)>0$, in contradiction to Lemma \ref{tightg}.
    
    Now suppose $\rho$ is not constant on $[t-\varepsilon, t+\varepsilon]$, and let $\lambda$ be the average value of $\rho$ on the interval. If, for $\delta \in (0,1)$, we define $\rho_{\delta}$ equal to $(1-\delta)\rho + \delta \lambda$ on $[t-\varepsilon, t+\varepsilon]$ and equal to $\rho$ otherwise, then $I_2(F,\rho_{\delta}) < I_2(F,\rho)$ due to convexity of $x \mapsto x^{\frac 32}$. But $g(F,\rho,b) = g(F,\rho_{\delta},b)$ outside of the interval, and for sufficiently small $\delta$, $g(F,\rho_{\delta},b) > 0$ is positive on the interval $[t-\varepsilon, t+\varepsilon]$ as well, contradicting minimality of $(F,\rho)$.
    
    Thus, there is some neighborhood of $t$ on which $F$ is linear and $\rho$ is constant.
    
    Considering various $t \in (p,q)$, these neighborhoods make an open cover of $[p+\varepsilon, q-\varepsilon]$, which has a finite subcover. Since line segments on overlapping open sets must have the same slope, $F$ is linear on $(p,q)$. Similarly, $\rho$ is constant on $(p,q)$.
\end{proof}

\begin{lem}\label{curvecont}
    If $g=0$ on the interval $[s_1,s_2]$, then $F'$ and $\rho$ are absolutely continuous on these intervals.
\end{lem}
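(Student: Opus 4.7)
The plan is to derive an Euler--Lagrange identity by perturbing $(F,\rho)$ within the constraint $g \equiv 0$, and from it deduce that $t\rho$ must be constant on $[s_1,s_2]$.

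Since $g = 0$ on $[s_1,s_2]$, equation \eqref{gprime} yields $\rho = (F/t - F')^2$ a.e.\ there, and using $F' \le F/t$ (Lemma~\ref{tightg}) this becomes $\sqrt{\rho} = F/t - F'$ a.e. For $\eta \in C_c^\infty((s_1,s_2))$, set $F_\varepsilon := F + \varepsilon\eta$ on $[s_1,s_2]$ (extended by $F$ elsewhere), and $\rho_\varepsilon := (F_\varepsilon/t - F_\varepsilon')^2$ on $[s_1,s_2]$ (extended by $\rho$ elsewhere). By construction the associated $g_\varepsilon := g(F_\varepsilon,\rho_\varepsilon,b)$ satisfies $g_\varepsilon' = 0$ on $[s_1,s_2]$ with $g_\varepsilon(s_1) = g(s_1) = 0$, hence $g_\varepsilon \equiv 0$ on that interval; and since $g_\varepsilon(s_2) = 0 = g(s_2)$ and the data agrees past $s_2$, also $g_\varepsilon = g \geq 0$ there, so $(F_\varepsilon,\rho_\varepsilon)$ is feasible. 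Because $u \mapsto |u|^3$ is $C^2$ with derivative $3u|u|$ and $F/t - F' \geq 0$, minimality of $I_2$ forces
\begin{equation}
0 \;=\; \frac{d}{d\varepsilon}\Big|_{\varepsilon=0} I_2(F_\varepsilon,\rho_\varepsilon) \;=\; 4\int_{s_1}^{s_2} \rho\,(\eta/t - \eta')\,dt.
\end{equation}

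The substitution $\phi = \eta/t$ gives $\eta' - \eta/t = t\phi'$, so the identity becomes $\int_{s_1}^{s_2} t\rho\,\phi'\,dt = 0$ for every $\phi \in C_c^\infty((s_1,s_2))$. Hence $(t\rho)' = 0$ distributionally on $(s_1,s_2)$, so $t\rho$ has an a.e.\ constant representative $C \geq 0$, meaning $\rho(t) = C/t$ a.e.\ on $[s_1,s_2]$, which is absolutely continuous (with $C = 0$ giving the trivial case $\rho \equiv 0$). Then $F'(t) = F(t)/t - \sqrt{C/t}$ a.e.; since $F/t$ is absolutely continuous (as $F$ is and $t$ is smooth and bounded away from $0$) and $\sqrt{C/t}$ is smooth, $F'$ is absolutely continuous as well.

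The main obstacle is the construction of an admissible perturbation and the justification that the first variation truly takes the stated form. The choice $\rho_\varepsilon = (F_\varepsilon/t - F_\varepsilon')^2$ is precisely what pins $g_\varepsilon$ to zero on $[s_1,s_2]$, and the matching boundary values $g_\varepsilon(s_2) = 0$ keep $g_\varepsilon$ identical to $g$ past $s_2$ without any further change to $\rho$; meanwhile, $C^2$-smoothness of $u \mapsto |u|^3$ together with compact support of $\eta$ and dominated convergence handle points where $F/t - F'$ vanishes, so no case distinction on $\{\rho > 0\}$ versus $\{\rho = 0\}$ is needed in the variational step.
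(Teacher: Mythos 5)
Your proof is correct, but it takes a genuinely different route from the paper's. The paper proves Lemma \ref{curvecont} by a soft monotonicity argument: since $g'=0$ forces $\rho=(\frac{F}{t}-F')^2$ a.e., and since the standing assumptions make $\rho$ nonincreasing (Lemma \ref{rholemma}) and $F'$ nonincreasing with $F'\le \frac{F}{t}$ (Lemmas \ref{concave}, \ref{tightg}), the two sides of this identity can only jump in opposite directions, hence neither jumps; writing $\frac{F}{t}=\sqrt\rho+F'$ then exhibits the absolutely continuous function $\frac{F}{t}$ as a sum of two monotone pieces, each of which must itself be absolutely continuous. You instead run the Euler--Lagrange computation that the paper postpones to Lemma \ref{curve}: the coupled perturbation $\rho_\varepsilon=(\frac{F_\varepsilon}{t}-F_\varepsilon')^2$ is exactly what keeps $g_\varepsilon\equiv 0$ on $[s_1,s_2]$ and $g_\varepsilon=g$ elsewhere, so feasibility holds for both signs of $\varepsilon$, the first variation must vanish, and the resulting distributional identity $(t\rho)'=0$ requires no a priori regularity while immediately yielding $\rho=C/t$ and hence $F'=\frac{F}{t}-\sqrt{C/t}$. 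Your route buys more than the lemma asks for---it identifies $\rho$ explicitly, absorbing most of Lemma \ref{curve}, and it does so without any circularity since the weak identity never uses the regularity being proved---at the cost of invoking minimality directly and of justifying differentiation under the integral (which your domination does handle, as $\frac{F}{t}-F'\in L^2$). One cosmetic caveat: the distributional argument determines $\rho$ only up to a null set, so "absolutely continuous" should be read as "admits an absolutely continuous representative"; under the paper's standing convention that $\rho$ is nonincreasing, this representative agrees with $\rho$ at every interior point, so nothing downstream is affected.
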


\begin{proof} 
    For $s \in [s_1,s_2]$, we have $g'(s)=0$, so, almost surely
    
    \begin{equation} \label{equationforthislemma}
        \rho(s) - (\frac{F(s)}s-F'(s))^2 =0.
    \end{equation}
    $\rho$ is nonincreasing, so it is either continuous or has downward jump discontinuities. Similarly, $F'$ is nonincreasing, so $\frac{F(s)}s-F'(s)$ has only upward jump discontinuities. Since $F'(t) \leq \frac{F(t)}t$ by Lemma \ref{concave}, $(\frac{F(s)}s-F'(s))^2$ has only upward jump discontinuities. Then the left side of \eqref{equationforthislemma} can have only downward jump discontinuities. But the right side of \eqref{equationforthislemma} is equal to the constant zero, so $\rho$ and $F'$ cannot have any discontinuities.
    
    Next, \eqref{equationforthislemma} can be rewritten as

    \begin{equation} \label{otherequationforthislemma}
        \frac{F(s)}s = \sqrt{\rho(s)} + F'(s).
    \end{equation}

    Since $\rho$ and $F'$ are continuous and nondecreasing, we can define nonnegative measures $\mu_1$ and $\mu_2$ by

    \begin{equation}
    \sqrt{\rho(s)} = \sqrt{\rho(s_1)} + \mu_1([s_1,s]), F'(s) = F'(s_1) + \mu_2([s_1,s]). 
    \end{equation}

    But since $\frac{F(s)}s$ is absolutely continuous, by \eqref{otherequationforthislemma}, $\mu_1+\mu_2$ is continuous with respect to the lebesgue measure and hence $\mu_1,\mu_2 << \lambda$. Thus $F'$ and $\sqrt{\rho}$ are absolutely continuous, and therefore so is $\rho$.
    
\end{proof}

\begin{lem} \label{curve}
    If $g(t) = 0$ for $t \in [s_1,s_2]$, then on this interval, $F(t) = c_1 \sqrt t + c_2 t$, and $\rho(t) = (\frac{F(t)}t - F'(t))^2$.
\end{lem}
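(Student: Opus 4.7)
The plan is to exploit what Lemma \ref{curvecont} already gives and then run a calculus-of-variations argument for the remaining claim. By that lemma, on $[s_1,s_2]$ both $F'$ and $\rho$ are absolutely continuous and $\rho(t)=(F(t)/t-F'(t))^2$, which is exactly the stated formula for $\rho$. It therefore remains to show $F(t)=c_1\sqrt{t}+c_2 t$. The strategy is to substitute $\rho=(F/t-F')^2$ into the rate and examine the first variation of the resulting functional $\tfrac{4}{3}\int_{s_1}^{s_2}(F/t-F')^3\,dt$ at the minimizer.

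Next I would introduce admissible variations that stay feasible. Fix $\eta\in C^1[s_1,s_2]$ with $\eta(s_1)=\eta(s_2)=0$, let $F_\varepsilon=F+\varepsilon\eta$ on $[s_1,s_2]$, and set $\rho_\varepsilon=(F_\varepsilon/t-F_\varepsilon')^2$. Extend both by $(F,\rho)$ outside $[s_1,s_2]$. On $[s_1,s_2]$, \eqref{gprime} gives $g_\varepsilon'=0$, so $g_\varepsilon\equiv g(s_1)=0$ there. At $t=s_2$, $g_\varepsilon(s_2)=0=g(s_2)$, and since the two pairs agree past $s_2$, $g_\varepsilon$ coincides with $g$ on $[s_2,t_2]$. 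Hence $g_\varepsilon\geq 0$ everywhere and $(F_\varepsilon,\rho_\varepsilon)$ is feasible (note that we do not need $\rho_\varepsilon$ to be nonincreasing, since the minimal rate is already achieved at $(F,\rho)$).

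Writing $u=F/t-F'\geq 0$ (by Lemma \ref{concave}) and $v=\eta/t-\eta'$, the map $s\mapsto |s|^3$ is $C^2$ with second derivative $6|s|$, so a second-order Taylor expansion yields $|u+\varepsilon v|^3-u^3=3u^2\varepsilon v+O(\varepsilon^2)$ uniformly in $t$ (using that $u$ is continuous, hence bounded, on $[s_1,s_2]$). Minimality then forces
\begin{equation*}
    0=\frac{d}{d\varepsilon}\bigg|_{\varepsilon=0} I_{2}(F_\varepsilon,\rho_\varepsilon)=4\int_{s_1}^{s_2}u^2(\eta/t-\eta')\,dt.
\end{equation*}
Since $u^2=\rho$ is absolutely continuous, integrating $u^2\eta'$ by parts (with $\eta$ vanishing at the endpoints) gives $\int_{s_1}^{s_2}(u^2/t+(u^2)')\eta\,dt=0$ for every such $\eta$. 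Hence $u^2/t+(u^2)'=0$ almost everywhere, or equivalently $(tu^2)'=0$, so $tu^2=C$ for some constant $C\geq 0$.

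The final step is to solve the first-order linear ODE $F/t-F'=\sqrt{C}/\sqrt{t}$. Using integrating factor $1/t$ this becomes $(F/t)'=-\sqrt{C}/t^{3/2}$, which integrates to $F/t=2\sqrt{C}/\sqrt{t}+c_2$, i.e.\ $F(t)=c_1\sqrt{t}+c_2 t$ with $c_1=2\sqrt{C}\geq 0$ (the case $C=0$ corresponds to $F$ linear, which is the degenerate $c_1=0$ case of the stated form). The delicate step I anticipate is justifying that the first-order expansion is legitimate at points where $u$ vanishes, but this is absorbed by the uniform $O(\varepsilon^2)$ bound above: on $\{u=0\}$ the integrand contributes only $|\varepsilon v|^3=O(\varepsilon^3)$, which does not affect stationarity.
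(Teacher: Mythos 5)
Your proof is correct and follows essentially the same route as the paper: a first-variation (Euler--Lagrange) argument for $\int (F/t-F')^3$ with $\rho$ slaved to $F$ via the constraint $g'\equiv 0$, after which one solves the resulting ODE. The only real difference is in that last step: you integrate $(t\rho)'=0$ directly to get $t(F/t-F')^2=C$, which cleanly absorbs the degenerate case $F'=F/t$ (i.e.\ $C=0$), whereas the paper expands the same equation using $F''$ and must treat that case separately before dividing by $F/t-F'$.
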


The Euler-Lagrange equation from calculus of variations states that the function $f = \rho^{\frac 32}$ must satisfy the equation

\begin{equation}
    \frac{\partial f}{\partial F} = \frac{d}{dt}\frac{\partial f}{\partial F'}.
\end{equation}

We give a quick alternative way to see this through calculus.

\begin{proof}
    The equality $\rho(t) = (\frac{F(t)}t - F'(t))^2$ follows directly from the definition of $g$. Let $\phi$ be a nonnegative smooth function supported on $[s_1,s_2]$, and define 
    \begin{equation}
    F_{\alpha} = F + \alpha \phi,  \rho_{\alpha}(t) = (\frac{F_{\alpha}(t)}t - F_{\alpha}'(t))^2.
    \end{equation}
     Now consider the effect of $\alpha$ on $I_{2} = \int \rho^{\frac 32} = \int (\frac{F(s)}{s}-F')^3$:
    \begin{equation}
    \begin{aligned}
    \frac d{d\alpha} (\frac 34 I_{2}(F_{\alpha},\rho_{\alpha}))\Bigr|_{\alpha = 0} 
    &= \frac d{d\alpha}\int_{s_1}^{s_2}(\frac{F_{\alpha}(s)}s - F_{\alpha}'(s))^3 ds \\
    &= \int_{s_1}^{s_2} \frac{\partial F_{\alpha}(s)}{\partial \alpha}\frac{\partial}{\partial F(s)}(\frac{F(s)}s - F'(s))^3 ds \\ &+ \int_{s_1}^{s_2} \frac{\partial F_{\alpha}'(s)}{\partial \alpha}\frac{\partial}{\partial F'(s)}(\frac{F(s)}s - F'(s))^3 ds 
    \end{aligned}
    \end{equation}
    Using the linear dependence of $F_{\alpha}$ on $\alpha$, we see this is equal to
    \begin{equation}
    \begin{aligned}
    &\int_{s_1}^{s_2} (\phi(s-t))3(\frac{F(s)}s - F'(s))^2(\frac 1s) ds \\ + &\int_{s_1}^{s_2} (\phi'(s-t))3(\frac{F(s)}s - F'(s))^2(-1) ds 
    \end{aligned}
    \end{equation}
    This derivative must be equal to zero for $F = F_0$ and for any $\phi$, as otherwise $I_{2}$ will be lower for $F_{\alpha}$ for some $\alpha$ sufficiently close to zero. Since $\phi$ was arbitrary, this means that $3(\frac{F(s)}s - F'(s))^2(\frac 1s)$ is the weak derivative of $3(\frac{F(s)}s - F'(s))^2(-1)$. Since the latter is absolutely continuous by Lemma \ref{curvecont}, we can infer the differential equation
    \begin{equation}
    \begin{aligned}
    -\frac{d}{ds}(\frac{F(s)}s - F'(s))^2 = \frac 1s (\frac{F(s)}s - F'(s))^2.
    \end{aligned}
    \end{equation}

    This simplifies to 

    \begin{equation} \label{diffeq}-(\frac{F'(s)}s - \frac{F(s)}{s^2} - 2F''(s))(\frac{F(s)}s - F'(s)) = 0.
    \end{equation}

    Now suppose $F'(t) = \frac{F(t)}t$ for some $t \in (s_1,s_2)$. Since $\rho$ is nonincreasing by Lemma \ref{rholemma}, this implies that $\rho = 0$ on $[t,s_2]$, so $F'(s_2) = \frac{F(s_2)}{s_2}$. By Lemma \ref{concave}, $F$ is concave, so $F(t_1) \leq F(s_2) - \frac{F(s_2)}{s_2}s_2(s_2-t_1)$, and therefore $F(s_2) \geq x_1\frac{s_2}{t_1}$. But by Lemma \ref{derivativelembad}, $\frac{F(s_2)}{s_2}$ is nonincreasing, so $F'(s) = \frac{F(s)}s = \frac{x_1}{t_1}$ must hold for any $s \in [t_1,s_2]$. Thus $F$ is linear on $[s_1,s_2]$, which satisfies the conclusion of this lemma with $c_1=0$.
    
    Otherwise, we can divide \eqref{diffeq} by $-(\frac{F(s)}s - F'(s))$, and we get the following differential equation:

    \begin{equation}
    -\frac {F(t)}{t^2}+\frac{F'(t)}t - 2F''(t) = 0.
    \end{equation}
    Since we know the antiderivative, $(\frac{F(t)}{t}-2F'(t))$, is absolutely continuous, it must be equal to some constant $C_1$. We can then divide by $\sqrt t$ to get
    \begin{equation}
    \frac{F(t)}{t^{\frac 32}}-2\frac{F'(t)}{\sqrt t} = \frac{C_1}{\sqrt t}.
    \end{equation}

    Again the antiderivative of the left side, $-\frac{2F(t)}{\sqrt t}$, is absolutely continuous, and so integrating again yields
    
    \begin{equation}
    F(t) = -C_1 \sqrt t -2C_2t.
    \end{equation}
\end{proof}

\begin{lem} \label{zero}
    There exists $t_{B} \in [t_1,t_2]$ such that $g(t)=0$ for $t \in [t_{B},t_2]$, and $g(t) > 0$ for $t \in [t_1,t_{B})$. $t_{B} = t_1$ if and only if $b=0$.
\end{lem}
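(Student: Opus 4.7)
The plan is to identify $t_B$ as the leftmost zero of $g$ and then show that the zero set of $g$ is exactly $[t_B, t_2]$. Since $g$ is continuous on $[t_1, t_2]$ (as follows from the representation in \eqref{defineg}, using $\rho \in L^1$ and finite Dirichlet energy of $F$ guaranteed by feasibility), and $g(t_2) = 0$ by Lemma \ref{tightg}, the set $\{t \in [t_1, t_2] : g(t) = 0\}$ is a nonempty closed subset of $[t_1, t_2]$. I define $t_B$ to be its minimum; then $g(t) > 0$ for $t \in [t_1, t_B)$ by minimality, and $g(t_B) = 0$. The last clause, $t_B = t_1$ if and only if $b = 0$, then follows immediately from $g(t_1) = b$.

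The heart of the proof is showing $g \equiv 0$ on $[t_B, t_2]$, which I do by contradiction. Suppose $g(t^*) > 0$ for some $t^* \in (t_B, t_2)$, and set
\[
p = \sup\{t \leq t^* : g(t) = 0\}, \qquad q = \inf\{t \geq t^* : g(t) = 0\}.
\]
Both are attained, with $g(p) = g(q) = 0$ by continuity, and $g > 0$ on $(p, q)$ by the choices of $p$ and $q$ together with $g(t^*) > 0$. Since $p \geq t_B \geq t_1$ and $q \leq t_2$, the open interval $(p, q)$ lies in $(t_1, t_2)$, so Lemma \ref{line} applies and forces $F$ linear and $\rho$ constant on $(p, q)$.

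The closing computation is short. Writing $F(\tau) = A\tau + B$ and $\rho \equiv \rho_0$ on $(p, q)$, one has $\frac{F(\tau)}{\tau} - F'(\tau) = \frac{B}{\tau}$, so from \eqref{gprime},
\[
g'(\tau) = \rho_0 - \frac{B^2}{\tau^2}, \qquad g''(\tau) = \frac{2B^2}{\tau^3} \geq 0.
\]
Hence $g$ is convex on $[p, q]$; but $g(p) = g(q) = 0$ together with convexity forces $g \leq 0$ on $[p, q]$, contradicting $g > 0$ on $(p, q)$. (The degenerate case $B = 0$ is subsumed: $g$ is then affine with equal values at the endpoints, hence constant, again contradicting $g > 0$ on $(p, q)$.)

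I expect the main obstacle to be only notational bookkeeping: verifying that $(p, q) \subset (t_1, t_2)$ so that Lemma \ref{line} is applicable in all cases (including when $p = t_B = t_1$ under $b = 0$), and correctly justifying continuity of $g$ through \eqref{defineg}. The mechanism is otherwise mild — once Lemma \ref{line} forces $F$ linear and $\rho$ constant on $(p, q)$, convexity of $g$ between two zeros immediately rules out the existence of a positive excursion past $t_B$, so no new variational computation is needed beyond what underlies Lemmas \ref{tightg} and \ref{line}.
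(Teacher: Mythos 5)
Your proof is correct and takes essentially the same route as the paper's: both define $t_B$ as the leftmost zero of the continuous function $g$, isolate a maximal positive excursion $(p,q)$ between two zeros, invoke Lemma \ref{line} to make $F$ linear and $\rho$ constant there, and derive a contradiction from the resulting convexity of $g$ on $[p,q]$ (the paper phrases this as $g'$ being nondecreasing, citing Lemma \ref{derivativelembad}, whereas you compute $g''(\tau)=2B^2/\tau^3\ge 0$ directly --- a cosmetic difference only).
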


\begin{proof}
    We have $g(t_2)=0$ by Lemma \ref{tightg}. Suppose the claim is false; then there exists some $s \in [t_1,t_2]$ such that $g(s) > 0$, but $g$ has a zero both above and below $s$. Let $p$ be the greatest zero below $s$ and $q$ be the least zero above $s$, so that $g(t)>0$ for $t \in (p,q)$, and $g(p)=g(q)=0$. By Lemma \ref{line}, $\rho$ is constant and $F$ is linear on $(p,q)$, so $F'(t)$ is also constant. By Lemma \ref{derivativelembad}, $\frac{F(t)}t$ is nonincreasing, so this means that $g'(t) = \rho(t) - (\frac{F(t)}t-F'(t))^2$ is nondecreasing, which is impossible as $g(p)=0$, $g(s)>0$, $g(q)=0$. $t_{B}$ is the smallest $t$ such that $g(t)=0$, so $t_{B}=t_1$ if and only if $g(t_1)=b=0$.
\end{proof}

\begin{proof}[Proof of Proposition \ref{bigtheorem}]By Lemma \ref{optimizer}, there exists a minimizing feasible pair for $I_{2}$. Suppose $(F_1,\rho_1)$ and $(F_2,\rho_2)$ are both minimizers. Since $g$ is concave as a function of $F$ and linear in $\rho$, it must be that $(\frac 12(F_1+F_2),\frac 12(\rho_1+\rho_2))$ is feasible. Since $\int \rho^{\frac 32}$ is strictly convex, it follows that $I_{2}(\frac 12(F_1+F_2),\frac 12(\rho_1+\rho_2)) \leq I_{2}(F_1,\rho_1)$, with equality possible only when $\rho_1=\rho_2$ almost everywhere. Therefore, the optimal $\rho$ is unique.

     By Lemma \ref{tightg}, any optimizing $F$ must satisfy the conclusions of Lemma \ref{concave}. By Lemmas \ref{line}, \ref{curve}, \ref{zero}, $F$ must be of the given form: namely, there exists $t_{B} \in [t_1,t_2]$ such that $F$ is linear and $\rho$ is constant on $[t_1,t_{B})$, and $F(t) = c_1 \sqrt t + c_2 t$ and $\rho(t) = (\frac{F(t)}s-F'(t))^2$ for $t \in (t_{B},t_2]$, with $t_{B}=t_1$ if and only if $b=0$, and moreover, $g', F', \rho$ are continuous at the merge point $t_{B}$.

     The fact that the parameters $c_1,c_2,x_{B},t_{B}$ are unique, and therefore that $F$ is unique, follows from the uniqueness of $\rho$.

\end{proof}

This completes the answer to Problem \ref{relaxedstatement}.

\section{Main problem and computations}

We now apply Problem \ref{relaxedstatement} to the original question of finding the lowest rate metric for which the geodesic from $(0,0)$ to $(0,1)$ passes through $(1,a)$. This will be an extension of $F$ and $\rho$ from $[a,1]$ to $[0,1]$.

\begin{lem} \label{caseslemma}
    Suppose $(F,\rho)$ answer Problem \ref{relaxedstatement} for $(x_2,t_2)=(0,1)$ and for $(x_1,t_1)=(1,a)$. Extend $F$ and $\rho$ to $[0,1]$ by defining $F(0)=0$, defining $F$ linearly on $[0,a]$ and setting $\rho$ to be constant on $[0,a]$ with the value such that $b = \int_0^a (\rho(t)-(\frac{F(t)}{t}-F'(t))^2)dt$. Suppose that $b$ is such that $\rho$ is constant on $[0,t_{B}]$. Then 
    \begin{equation} \label{casescondition}
        -\frac{(F(q)-F(p))^2}{q-p} \leq \int_p^q \rho(t)-F'(t)^2 dt,\forall p<q \in [0,1],
    \end{equation}
    with strict inequality if $a \not \in (p,q)$ and $F$ is not linear on $[p,q]$.
\end{lem}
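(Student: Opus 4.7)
The plan is to reduce condition \eqref{casescondition} to a purely algebraic inequality and verify it by case analysis on where $(p, q)$ lies relative to the breakpoints $a$ and $t_B$. I would first rewrite \eqref{casescondition} in the equivalent form
\begin{equation*}
H(p, q) \,:=\, \int_p^q \rho(t)\, dt \,-\, \int_p^q (F'(t) - v)^2\, dt \,\geq\, 0, \qquad v \,:=\, \frac{F(q) - F(p)}{q - p},
\end{equation*}
via the identity $\int_p^q (F' - v)^2\,dt = \int_p^q F'^2\,dt - (F(q) - F(p))^2/(q-p)$. I would then extract a structural identity: combining the hypothesis that $\rho$ is constant on $[0, t_B]$ with the continuity of $\rho$ at $t_B$ (which gives $\rho_1 = (1-ma)^2/t_B^2$, where $m$ is the slope of $F$ on $[a, t_B]$) and the condition $g(t_B) = 0$, a short calculation forces $t_B = 2a$ and the identity $\rho_0 = (1/a - m)^2/4$, where $\rho_0$ is the common value of $\rho$ on $[0, 2a]$.

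When $p$ and $q$ lie in a single linear piece of $F$, $F' \equiv v$ and $H(p, q) = (q-p)\rho_0 \geq 0$. When $p, q \in [2a, 1]$, direct computation with $F(t) = c_1(\sqrt t - t)$ and the substitution $u = \sqrt t$ yields $H(p, q) = c_1^2 (\sqrt q - \sqrt p)/(\sqrt q + \sqrt p)$, strictly positive unless $c_1 = 0$ (i.e., $F$ is linear on $[p, q]$). When $p \in [0, a]$ and $q \in [a, 2a]$, setting $s = a - p$ and $r = q - a$ and expanding gives $H(p, q) = (r + s)\rho_0 - rs (1/a - m)^2/(r + s)$, which by the identity $\rho_0 = (1/a - m)^2/4$ collapses to $\rho_0 (r - s)^2/(r + s) \geq 0$.

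The main obstacle is the remaining two cases in which $[p, q]$ crosses $t_B = 2a$. For $p \in [a, 2a]$ and $q \in [2a, 1]$, I would employ a shift trick: set $\tilde F(t) = F(t) - F(p) - m(t - p)$, so that $\tilde F(p) = 0$ and $\tilde F' \equiv 0$ on $[p, 2a]$; the smooth merge relation $m = c_1(1 - 2\sqrt{2a})/(2\sqrt{2a})$ at $t_B$ then simplifies $\tilde F'(t)$ to $-c_1(\sqrt t - \sqrt{2a})/(2\sqrt{2a}\,\sqrt t)$ on $[2a, q]$. Writing $u = \sqrt q$, $v_0 = \sqrt{2a}$, $\alpha = u - v_0$, $w = 2a - p$, and using $\rho_0 = c_1^2/(4v_0^2)$, the computation reduces to the perfect square
\begin{equation*}
H(p, q) \,=\, \frac{c_1^2 (w + 2 v_0 \alpha)^2}{4 v_0^2 (q - p)} \,\geq\, 0.
\end{equation*}
For $p \in [0, a]$ and $q \in [2a, 1]$, an analogous shift-and-simplify calculation, in which the extra linear segment on $[p, a]$ contributes a term equal to $(1/a - m)^2 s = 4 \rho_0 s$, yields $H(p, q) = c_1^2\, p\,(p + 4 v_0 \alpha + 8 \alpha^2) / [4 v_0^2 (q - p)] \geq 0$, vanishing only at $p = 0$. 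The strict inequality when $a \notin (p, q)$ and $F$ is not linear on $[p, q]$ then follows from these formulas, since in those cases the relevant numerator is strictly positive.
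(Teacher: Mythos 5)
Your computations are correct --- I verified the closed forms $H(p,q)=c_1^2(\sqrt q-\sqrt p)/(\sqrt q+\sqrt p)$ on the parabolic piece, the collapse to $\rho_0(r-s)^2/(r+s)$ for $p\in[0,a]$, $q\in[a,2a]$, and both perfect-square identities for the intervals crossing $t_B$ --- so the argument works, but it is a genuinely different route from the paper's. The paper never computes $t_B$, $c_1$ or $\rho_0$ inside this lemma: for $p<a<q\le t_B$ it notes that the slack $g(F_L,\rho_L,b)(r)$ is a quadratic in $r$ with a double zero at $t_B$ (using $g(t_B)=g'(t_B)=0$ from Proposition \ref{bigtheorem}), hence nonnegative; for $q>t_B$ it compares $g_p'$ with $g_0'=g'=0$, using only that $(F(t)-F(p))/(t-p)$ is an average of the nonincreasing $F'$, so $g_p'\ge g'$. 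You instead front-load the identifications $t_B=2a$, $\rho_0=(1/a-m)^2/4$, $\rho=c_1^2/(4t)$ and reduce every case to an explicit algebraic identity. What your version buys is a completely explicit formula for the slack in each case, which makes the equality/strictness analysis transparent; what the paper's version buys is independence from the explicit solution (which it only computes later, in Theorem \ref{ratecomputation}) and robustness to degenerate parameters.

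Two points need patching. First, solving $g(t_B)=0$ yields $(t_B-ax_B)^2\bigl(\tfrac{1}{t_B-a}-\tfrac 1a\bigr)=0$ up to a positive factor, so you must rule out the root $t_B=ax_B$ before concluding $t_B=2a$; this follows as in the proof of Theorem \ref{geodesicshape} ($F(t_B)/t_B=1/a$ would force $F'=F/t$ on $[a,t_B]$ by Lemma \ref{derivativelembad}, hence $g'=\rho\ge 0$ there, contradicting $g(a)=b>0=g(t_B)$), but it should be said. Second, that argument and your whole reduction presuppose $b>0$, whereas the lemma's hypotheses also admit $b=0$, in which case $t_B=t_1=a$ by Proposition \ref{bigtheorem}, the linear segment on $[a,2a]$ disappears, and $t_B=2a$ is simply false. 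Your single-piece and parabolic-piece computations survive unchanged (the latter does not use $t_B=2a$), but the crossing case $p<a<q$ then needs a separate argument --- e.g.\ the paper's monotonicity of $g_p$ --- or you should state explicitly that you treat only $b>0$, which is all that the application in Theorem \ref{geodesicshape} requires.
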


\begin{proof}

    We remark that this condition is equivalent to the path along $F$ having greater length than the straight line shortcut, with strict inequality whenever the shortcut is not equal to $F$ and does not pass through $a$, and proceed by cases:

    \begin{itemize}
        \item Case 1. $p,q \in [0,a]$ or $p,q \in [a,t_{B}]$. 
        
        This case is trivial because $F$ is linear on $[p,q]$, so the difference between the left and right sides of \eqref{casescondition} is $\int_p^q \rho(t)dt \geq 0$.
        \item Case 2. $p \in [0,a)$, $q \in (a,t_{B}]$.

    Suppose \eqref{casescondition} does not hold. If we define $F_L$ equal to $F$ on $[0,t_{B}]$ and extend the line from $(1,a)$ to $(x_{B},t_{B})$ for $t>t_{B}$, and define $\rho_L$ to be the constant value that $\rho$ takes on $[0,t_{B}]$, then by rescaling linearly we find that there is also a straight line shortcut for $(F_L,\rho_L)$ from $(0,0)$ to $(F_L(r),r)$ for $r=a+\frac{a}{a-p}(q-a)$.
    Observe that $g(F_L,\rho_L,b) = g(F,\rho,b)$ on $[0,t_{B}]$, that $g(t_{B})=0$ by definition and that $g'(t_{B})=0$ because $g'$ is continuous due to Proposition \ref{bigtheorem} and $t_{B}$ is a global minimum for $g(F,\rho,b)$. Considering \eqref{casescondition}, we have supposed the difference between the right and left sides, which is equal to $g(F_L,\rho_L,b)(r)$, is negative. Notice that 

    \begin{equation}
        g(F_L,\rho_L,b)(r) = \int_0^r \rho(t)-F'(t)^2 dt + \frac{(F(r)^2)}{r} 
    \end{equation}

    is quadratic in $r$, so $g(t_{B})=g'(t_{B})=0$ implies that it is everywhere nonnegative. This contradicts the existence of a shortcut for $(F_L,\rho_L)$ from $(0,0)$ to $(F(r),r)$.
    \item Case 3. $q \in (t_{B},1)$.
    
    Define $g_p(F,\rho)$ to be the difference in the sides of \eqref{casescondition}, so that
    \begin{equation}
    g_p(t) = \int_p^t \rho(\tau)-F'(\tau)^2 d\tau + \frac{(F(t)-F(p))^2}{t-p}.
    \end{equation}

    It follows that $g_p'(t) = \rho(t) - (F'(t)-\frac{F(t)-F(p)}{t-p})^2$

    We have seen in case 2 that when $p<t_{B}$, we have $g_p(t_{B}) \geq 0$. Therefore, for $r = \max(p,t_{B})$, we have $g_p(r) \geq 0$.

    Consider $t>r$. Notice that $\frac{F(t)-F(p)}{t-p} = \frac{1}{t-p}\int_p^t F'(\tau)d\tau$ is the average value of the nonincreasing function $F'$. It follows that

    \begin{equation}
    F'(t) \leq \frac{F(t)-F(p)}{t-p} \leq \frac{F(t)-F(0)}{t-0}
    \end{equation}

    where the second inequality is strict when $p>0$ since $F'$ is not constant on $[0,t]$.

    Consequently, $g'_p(t) \geq g'_0(t) = g'(t) = 0$ for $t \in [r,1]$. Therefore, $g_p(q) \geq 0$, so \eqref{casescondition} is satisfied.
    \end{itemize}  

    Note that equality holds only in Case 2, when $r = t_{B}$ and hence $p<a<q$, and in Case 3, when $p=0$ and hence $p<a<q$.
\end{proof}

\begin{thm} \label{geodesicshape}
    Consider the set of directed metrics for which the geodesic from $(0,0)$ to $(0,1)$ passes through $(1,a)$, for $a<\frac 12$. This set has a unique metric of minimal rate, $e(F,\rho)$, where $F,\rho$ are described as follows:
    
    The geodesic $F$ consists of a line segment from $(0,0)$ to $(1,a)$, followed by a line segment from $(1,a)$ to some point $(x_{B},t_{B})$, where $a < t_{B}$ and $x_{B} < \frac{t_{B}}a$, followed by the parabola $F(t) = x_{B}\frac{\sqrt t - t}{\sqrt t_{B} - t_{B}}$ for $t \in [t_{B},1]$. The planted density
    $\rho$ is constant on $[0,t_{B}]$ and $\rho(t) = (\frac{F(t)}t-F'(t))^2$ for $t \in [t_{B},t_2]$. $F'$ and $\rho$ are continuous at $t_{B}$. $0 < b \leq \frac{1}{1-a} + \frac{1}{a} = \frac{(x_2-x_1)^2}{t_2-t_1} + \frac{x_1^2}{t_1} - \frac{x_2^2}{t_2}$.
\end{thm}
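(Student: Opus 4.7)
The plan is to decouple the minimization into the restrictions on $[0, a]$ and $[a, 1]$ via the interface scalar $b := e(0, 0, 1, a) - d(0, 0, 1, a) \geq 0$. For any feasible $(F, \rho)$, condition \eqref{casescondition} with $p = 0$ specializes to the inequality $g \geq 0$ of Problem \ref{relaxedstatement} on $[a, 1]$, so the $[a, 1]$ restriction is feasible for that relaxed problem with $(x_1, t_1, x_2, t_2) = (1, a, 0, 1)$ and this value of $b$. The upper bound $b \leq 1/(1-a) + 1/a$ required by Problem \ref{relaxedstatement} holds at any minimizer, since otherwise the straight line from $(1, a)$ to $(0, 1)$ with $\rho \equiv 0$ is already feasible on $[a, 1]$ with zero rate there, and one could strictly decrease $\rho$ on $[0, a]$ to lower the total rate.

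Applying Proposition \ref{bigtheorem} to this restriction yields the claimed three-piece structure: for some $t_B \in [a, 1]$, $F$ is linear with $\rho$ constant on $[a, t_B)$, and $F(t) = c_1\sqrt{t} + c_2 t$ with $\rho(t) = (F(t)/t - F'(t))^2$ on $(t_B, 1]$. The boundary condition $F(1) = 0$ forces $c_2 = -c_1$, so $F(t) = c_1(\sqrt{t} - t)$, and matching $F(t_B) = x_B$ gives $c_1 = x_B/(\sqrt{t_B} - t_B)$, reproducing the claimed parabola. Concavity of $F$ (Lemma \ref{concave}) together with $F(a) = 1$ makes $F(t)/t$ nonincreasing by Lemma \ref{derivativelembad}, hence $x_B/t_B \leq 1/a$; the inequality is strict whenever $b > 0$, since otherwise $F$ would coincide with the line through the origin on $[a, t_B]$, forcing the matching conditions at $t_B$ to degenerate.

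For the $[0, a]$ portion, given $b$, I minimize $\int_0^a \rho^{3/2}\, d\tau$ over $(F, \rho)$ with $F(0) = 0$, $F(a) = 1$, and $b = \int_0^a(\rho - F'^2)\, d\tau + 1/a$. Cauchy--Schwarz gives $\int_0^a F'^2\, d\tau \geq 1/a$ with equality iff $F$ is linear, which forces $\int_0^a \rho\, d\tau \geq b$. Jensen applied to the convex function $x \mapsto x^{3/2}$ then yields
\begin{equation}
\int_0^a \rho^{3/2}\, d\tau \;\geq\; a \left( \frac{1}{a}\int_0^a \rho\, d\tau \right)^{3/2} \;\geq\; b^{3/2}/\sqrt{a},
\end{equation}
with equality iff $\rho$ is constant. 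Both equalities occur simultaneously only for $F$ linear on $[0, a]$ and $\rho \equiv b/a$, matching the claim. The no-shortcut condition \eqref{casescondition} restricted to $[0, a]$ is automatic for linear $F$, and cross-interval shortcuts are handled by Lemma \ref{caseslemma}.

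The total rate thus reduces to $I(b) = \tfrac{4}{3} b^{3/2}/\sqrt{a} + I_2^{\min}(b)$, continuous on $[0, 1/(1-a) + 1/a]$ and attaining a minimum. The main remaining obstacle is to show $b > 0$ strictly. Writing $t_B = a + \varepsilon$ and using continuity of $F$, $F'$, and $\rho$ at $t_B$, a direct calculation on $[a, t_B]$ gives
\begin{equation}
b = -\int_a^{t_B} g'(t)\, dt = \frac{c_1^2}{4 a t_B}(t_B - a)^2,
\end{equation}
so $\varepsilon = \Theta(\sqrt{b})$ as $b \to 0^+$. Tracking $I_2^{\min}$ through this relation gives $I_2^{\min}(b) - I_2^{\min}(0) = -C\sqrt{b} + o(\sqrt{b})$ for some $C > 0$, which dominates the $O(b^{3/2})$ growth of the $[0, a]$ contribution and shows $I(b) < I(0)$ for small $b > 0$. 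Hence the minimizer is strictly interior, giving $t_B > a$ by Proposition \ref{bigtheorem}. Uniqueness of the minimizing metric follows from strict convexity of $x \mapsto x^{3/2}$ in Jensen's inequality combined with the uniqueness statement in Proposition \ref{bigtheorem}.
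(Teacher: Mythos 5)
Your architecture matches the paper's: decouple at $t=a$ through the scalar $b$, minimize the $[0,a]$ contribution by Cauchy--Schwarz and Jensen to get $I_1=\tfrac43 b^{3/2}a^{-1/2}$, apply Proposition \ref{bigtheorem} on $[a,1]$, and verify geodesicity a posteriori with Lemma \ref{caseslemma}. However, there are two genuine gaps. First, your key step for ruling out $b_{\mathrm{opt}}=0$ is the assertion $I_2^{\min}(b)-I_2^{\min}(0)=-C\sqrt{b}+o(\sqrt b)$, justified only by ``tracking $I_2^{\min}$ through'' the identity $b=\tfrac{c_1^2}{4at_B}(t_B-a)^2$. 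That identity describes the minimizer at parameter $b$, but it does not by itself yield an expansion of the minimal \emph{value}; naively, a first-order perturbation of a minimizer changes the value only at second order, so the claimed $\sqrt b$ gain needs an actual competitor construction or an envelope argument, neither of which you supply. The conclusion can be rescued more cheaply: the paper shows $b\mapsto I_2^{\min}(b)$ is convex (convex combinations of feasible pairs for $b_1,b_2$ are feasible for the interpolated $b$), and a convex nonincreasing function that is strictly positive at $0$ and vanishes at $b_{\max}$ must decrease at least linearly near $0$, which already dominates the $O(b^{3/2})$ cost of $I_1$. The paper itself instead uses a direct rescaling trick: if $\rho\equiv 0$ on $[0,a]$, pushing the mass on $[a,1]$ onto $[0,1]$ by an affine time change produces a feasible pair with rate multiplied by $(1-a)^{1/2}<1$. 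You also assert continuity of $I_2^{\min}$ without proof; convexity is what delivers it.

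Second, the theorem claims $\rho$ is constant on all of $[0,t_B]$, and Lemma \ref{caseslemma} --- which you invoke for cross-interval shortcuts --- explicitly \emph{assumes} this. Your argument only shows $\rho$ is constant on $[0,a]$ (value $b/a$) and separately constant on $[a,t_B)$ (from Proposition \ref{bigtheorem}); you never show these two constants agree. The paper closes this by an averaging perturbation: if they differ, replace $\rho$ on $[0,t_B]$ by $(1-\varepsilon)\rho+\varepsilon k$ with $k$ the mean; strict convexity of $x\mapsto x^{3/2}$ lowers the rate while feasibility persists for small $\varepsilon$ since $g(a)>0$. Without this step your appeal to Lemma \ref{caseslemma} is circular, and part of the stated conclusion is unproved.
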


\begin{proof}
    We will first show that this pair of $F,\rho : [0,1] \rightarrow \mathbb R$ is the best among those that admit no shortcuts from $(0,0)$, and then show that $e(F,\rho)$ also admits no other shortcuts.
    
    For a given value of $b$, we consider the values of $F$ and $\rho$ on $[0,a]$. We have
    
    \begin{equation} \label{intrhoisb}
        b = \int_0^a \rho(t) - F'(t)^2 dt + \frac{1}{a} = \int_0^a \rho(t) - (F'(t) - \frac 1a)^2 dt,
    \end{equation}

    where we used the fact that $\int_0^a F'(t)dt = 1$.
    
    It follows that the contribution to the rate from $t \in [0,a]$, $I_{1}$, is minimized when $F$ is a straight line and $\rho$ is constant on $[0,a]$: from \eqref{intrhoisb} we see that $F' = \frac 1a$ uniquely minimizes the value of $\int_0^a \rho(t)dt$, and since $x \mapsto x^{\frac 32}$ is convex, $I_{1} := \int_0^a \frac 43 \rho^{\frac 32}$ is minimized when $\rho$ is constant on this line by Jensen's inequality. The minimum value of $I_{1}$ is 

    \begin{equation}
        I_{1} = \frac 43 a \rho^{\frac 32} = \frac 43 b^{\frac 32} a^{-\frac 12}.
    \end{equation}
    
    Clearly, $I_{1}$ is convex as a function of $b$. Observe that if $(F_1,\rho_1)$ and $(F_2,\rho_2)$ are feasible for Problem \ref{relaxedstatement} for $b_1$ and $b_2$, respectively, then for $\lambda \in (0,1)$, $(\lambda F_1 + (1-\lambda)F_2,\lambda \rho_1 + (1-\lambda)\rho_2)$ is feasible for $\lambda b_1 +(1-\lambda) b_2$. Applying this to minimizing $(F_1,\rho_1)$ and $(F_2,\rho_2)$, since $I_2$ is a convex function of $\rho$, we see that the minimum value of $I_2$ is a convex function of $b$. Then the minimum value of $I := I_1+I_2$ is convex as a function of $b$ which goes to infinity as $b \rightarrow \infty$, so $I$ is minimized at some unique finite $b=b_{opt}$.
    
    This $b_{opt}$ must satisfy $b_{opt} \leq \frac{(x_2-x_1)^2}{t_2-t_1} + \frac{x_1^2}{t_1} - \frac{x_2^2}{t_2}$ as in the condition of Problem \ref{relaxedstatement}, since $I_{2}$ is uniformly zero for $b \geq \frac{(x_2-x_1)^2}{t_2-t_1} + \frac{x_1^2}{t_1} - \frac{x_2^2}{t_2}$, while $I_{1}$ is strictly increasing in $b$.

    Consider now the pair $(F,\rho)$ minimizing $I$ for $b=b_{opt}$. We first want to rule out the degenerate case where $b_{opt}=0$. Suppose otherwise; then $\int_0^a \rho(t)dt=0$, so $\rho$ is uniformly zero on $[0,a] = [0,t_{B}]$. Define $\hat \rho(t) = (1-a)\rho(a+\frac{t}{1-a})$. Then
    \begin{equation}
        \int_0^t \hat \rho(\tau) d\tau = \int_0^{a+\frac{t}{1-a}} \hat \rho(\tau) d\tau \geq \int_0^a \rho(\tau) d\tau,
    \end{equation}
    so $(F,\hat \rho)$ is feasible for Problem \ref{relaxedstatement} for some $\hat b>0$ (note that $\hat b = \int_0^a \hat \rho \neq 0$ since $\rho$ is nonincreasing on $[a,1]$). Moreover, $I(F,\hat \rho) = (1-a)^{\frac 12}I(F,\rho)$, so the minimizer for $\hat b$ will have lower rate than the minimizer for $b=0$, and therefore $b_{opt} \neq 0$.

    Thus $g(a)>0$, so in particular by Lemma \ref{zero}, $t_{B}>a$. We know that $\frac{F(t_{B})}{t_{B}} \leq \frac{a}{1}$ by Lemma \ref{derivativelembad}, and equality would imply that $g'(t) = \rho \geq 0$ for $t \in (a,t_{B})$, which contradicts $g(t_{B})=0$. Therefore, $x_{B} < \frac{t_B}a$.
    
     Next, we show that $\rho$ is constant on $[0,t_{B}]$. Note that $\rho$ and $F'$ are constant on $[0,a]$ and on $[a,t_{B}]$. Suppose $\rho$ is not constant on $[0,t_{B}]$, and let $k$ be the average value of $\rho$ on $[0,t_{B}]$. Then for $\varepsilon < \frac{g(a)}{t_{B}k}$, we can write $\hat \rho = (1-\varepsilon)\rho +\varepsilon k$ on $[0,t_{B}]$ and $\hat \rho = \rho$ otherwise, so that $(F,\hat \rho)$ is feasible for some $b$. But $\int \hat \rho ^{\frac 32} < \int \rho ^{\frac 32}$ by convexity of $x \mapsto x^{\frac 32}$, which contradicts choice of $b_{opt}$. Therefore, $\rho$ is constant on $[0,t_{B}]$.
    
    Finally, $F$ is in fact a geodesic for $e(F,\rho)$ by Lemma \ref{caseslemma}.
\end{proof}

It remains to compute $I$ and the point $(x_{B},t_{B})$ as a function of $a$.

\begin{thm} \label{ratecomputation}
$I = \frac{8}{3a^2(3-2\sqrt{2a})^2}$. $x_{B} = \frac{4(1-\sqrt {2a})}{3-2\sqrt{2a}}$ and $t_{B} = 2a$.
In particular, in the limit as $a \rightarrow 0$, $x_{B} \rightarrow \frac 43$ and $I = \frac{8}{27a^2} + o(\frac 1{a^2})$.
\end{thm}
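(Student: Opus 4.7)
The plan is to leverage the parametric form of the optimal $(F,\rho)$ from Theorem \ref{geodesicshape} and Proposition \ref{bigtheorem} to reduce to a finite-dimensional computation. The free parameters are the constant $\rho_0$ that $\rho$ takes on $[0,t_B]$, the slope $m$ of the linear segment of $F$ on $[a,t_B]$, and the merge point $t_B$; then $x_B = 1 + m(t_B-a)$, and the parabolic piece $F(t) = x_B(\sqrt t - t)/(\sqrt{t_B} - t_B)$ on $[t_B,1]$ is determined by its endpoints. I will first write $g$ explicitly on $[a,t_B]$, use both continuity conditions at $t_B$ from Proposition \ref{bigtheorem} to extract $t_B = 2a$, then use continuity of $F'$ to solve for $x_B$, and finally evaluate the rate integral piece by piece.

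On $[a,t_B]$ the form $F(t)=1+m(t-a)$ gives $F(t)/t - F'(t) = (1-ma)/t$, so \eqref{gprime} yields $g'(t) = \rho_0 - (1-ma)^2/t^2$. Integrating with initial condition $g(a) = a\rho_0$ (which follows from \eqref{eq23} since $F(t)=t/a$ on $[0,a]$) gives
\[
    g(t) = \rho_0 t - (1-ma)^2\,\frac{t-a}{a t}.
\]
Two conditions must hold at $t_B$: (i) $g(t_B) = 0$ from the definition of the merge point in Proposition \ref{bigtheorem}, and (ii) $g'(t_B^-) = 0$ because $g \equiv 0$ on $[t_B,1]$ forces $g'(t_B^+) = 0$, and $g'$ is continuous at $t_B$ by Proposition \ref{bigtheorem}. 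Condition (ii) reads $(1-ma)^2 = \rho_0 t_B^2$; substituting into (i) and cancelling $\rho_0 t_B$ (nonzero since $b > 0$ by Theorem \ref{geodesicshape}) yields $t_B - a = a$, i.e., $t_B = 2a$.

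With $t_B = 2a$ in hand, continuity of $F'$ at $t_B$ equates $m = (x_B-1)/a$ with $F'(t_B^+) = x_B(1-2\sqrt{2a})/[4a(1-\sqrt{2a})]$ from the parabolic formula; this is a linear equation in $x_B$ whose solution is $x_B = 4(1-\sqrt{2a})/(3-2\sqrt{2a})$. Using $1-ma = 2-x_B = 2/(3-2\sqrt{2a})$, the relation $(1-ma)^2 = 4a^2\rho_0$ yields $\rho_0 = 1/[a^2(3-2\sqrt{2a})^2]$. For the rate, split $I = (4/3)\int_0^1 \rho^{3/2}\,dt$ into contributions from $[0,a]$, $[a,2a]$, and $[2a,1]$. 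The first two each equal $(4/3)\,a\,\rho_0^{3/2} = 4/[3a^2(3-2\sqrt{2a})^3]$. On $[2a,1]$, with $C = x_B/(\sqrt{t_B}-t_B)$ one computes $\rho(t) = C^2/(4t)$ and $C^2/4 = 2/[a(3-2\sqrt{2a})^2]$, so the contribution is $(4/3)(C^2/4)^{3/2}\int_{2a}^1 t^{-3/2}\,dt = 16(1-\sqrt{2a})/[3a^2(3-2\sqrt{2a})^3]$. The three contributions sum to $[8 + 16(1-\sqrt{2a})]/[3a^2(3-2\sqrt{2a})^3] = 8/[3a^2(3-2\sqrt{2a})^2]$, since $8 + 16(1-\sqrt{2a}) = 8(3-2\sqrt{2a})$. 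The $a \to 0$ asymptotics are immediate: $(3-2\sqrt{2a})^2 \to 9$, so $I = 8/(27a^2) + o(1/a^2)$, and $x_B \to 4/3$.

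The main obstacle is the derivation of $t_B = 2a$ in the second paragraph: the clean answer depends on applying both the merge-point condition $g(t_B) = 0$ and the derivative condition $g'(t_B^-) = 0$ simultaneously, where the latter is made available only by the continuity of $g'$ asserted in Proposition \ref{bigtheorem}. Once $t_B = 2a$ is known, the rest of the theorem is algebraic bookkeeping.
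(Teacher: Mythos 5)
Your proposal is correct and follows essentially the same route as the paper: both pin down $t_B=2a$ from the two merge-point conditions $g(t_B)=0$ and $g'(t_B^-)=0$ (the latter being exactly the paper's $\rho(t_B)=(\tfrac{F(t_B)}{t_B}-F'(t_B))^2$ via continuity of $g'$ and $\rho$ from Proposition \ref{bigtheorem}), then use continuity of $F'$ to solve for $x_B$, and finally integrate $\rho^{3/2}$ piecewise. The only cosmetic difference is that you cancel the factor $\rho_0 t_B\neq 0$ where the paper factors out $(t_B-ax_B)^2$ and invokes $x_B\neq t_B/a$; your arithmetic checks out throughout.
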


\begin{proof}

We use the continuity conditions in Proposition \ref{bigtheorem}. We know that $\rho(t_{B}) = (\frac{F(t_{B})}{t_{B}} - F'(t_{B}))^2$ and $g(t_{B})=0$. Then
\begin{equation}
\begin{aligned}
        0 = g(t_{B}) &= t_{B}(\frac{F(t_{B})}{t_{B}} - F'(t_{B}))^2 + \frac{x_{B}^2}{t_{B}} - \frac{(1-x_{B})^2}{t_{B}-a} - \frac 1a \\ &= t_{B}(\frac {x_{B}}{t_{B}} + \frac{1-x_{B}}{t_{B}-a})^2 + \frac{x_{B}^2(t_{B}-a)a - (1-x_{B})^2t_{B}a - t_{B}(t_{B}-a)}{a(t_{B}-a)t_{B}} \\ &= \frac{(t_{B}-ax_{B})^2}{t_{B}(t_{B}-a)}(\frac1{t_{B}-a} - \frac 1a).
\end{aligned}
\end{equation}

By Theorem \ref{geodesicshape}, $t_{B}=ax_{B}$ is not a solution. Therefore, $t_{B}=2a$.

We now find $x_{B}$ by using continuity of $F'$. We have $F'(t_{B}^-) = \frac{x_{B}-1}a$, while $F'(t_{B}^+) = \frac d{dt}(x_{B}\frac{\sqrt t - t}{\sqrt{2a} - 2a})|_{s=2a} = x_{B}\frac{\frac 1{2 \sqrt {2a}} - 1}{\sqrt{2a}-2a}$.

Equating these yields

\begin{equation}
x_{B} = \frac{4(1-\sqrt {2a})}{3-2\sqrt{2a}}.
\end{equation}

We now use these to compute $I$.

    \begin{equation}
\begin{aligned}
        \frac 34 I &= \int_0^{t_{B}} \rho(s)^{\frac 32}ds + \int_{t_{B}}^1 \rho(s)^{\frac 32}ds \\
        &= t_{B}(\frac{t_{B}-ax}{t_{B}(t_{B}-a)})^3 + \int_{t_{B}}^1 (\frac 1s \frac{x_{B}(\sqrt s - s)}{\sqrt {t_{B}} - t_{B}} - x_{B}\frac{\frac1 {2\sqrt s}-1}{\sqrt {t_{B}} - t_{B}})^3 ds \\
        &= \frac{2}{a^2(3-2\sqrt{2a})^2}.
\end{aligned}
\end{equation}

\end{proof}

\section{Proofs of main theorems}

The main results of the paper now follow from usual analysis using rate functions.

\begin{proof}[Proof of Theorem \ref{metrictheorem}]
Let $I_{min} = \frac {8}{3a^2(3-\sqrt{8a})^2}$

Denote by $S$ the set of directed metrics for which the rightmost geodesic from $(0,0)$ to $(0,1)$ passes through $(z,a)$ for some $z \geq 1$. We will show that this set is relatively closed in the set of finite rate metrics in the topology of uniform convergence on bounded sets. In other words, if $\{e_n\}_{n=1}^{\infty}$ are finite rate metrics converging to a finite rate metric $e$, and if each $e_n$ has a geodesic passing through $(z_n,a)$ for some $z_n \geq 1$, then $e$ also has a geodesic passing through $(z,a)$ for some $z \geq 1$. 

By Lemma 4.10 in \cite{Das_Dauvergne_Virag_2024}, observe that for sufficiently large $N>0$ and for some $M>0$, we have $\sup_{t \in (0,1)} e(0,0,N,t)+e(N,t,0,1) < -M$. 

If infinitely many $e_n$ have a geodesic from $(0,0)$ to $(0,1)$ passing through some $(z,s)$ for $z > N$, then $\sup_{t \in (0,1)} e_n(0,0,N,t)+e_n(N,t,0,1) \geq 0$, which contradicts $e_n \rightarrow e$. Thus we may assume that the geodesics from $(0,0)$ to $(0,1)$ of each $e_n$ do not exceed $N$. But then each $e_n$ satisfies 

\begin{equation}
    \sup_{ 1 \leq z \leq N} (e_n(0,0,z,a)+e_n(z,a,0,1))=e_n(0,0,0,1).
\end{equation}
Then $e_n(0,0,0,1)-e_n(0,0,z,a)-e_n(z,a,0,1)$ are continuous functions of $z$ whose minimums are zero. Then $e(0,0,0,1)-e(0,0,z,a)-e(z,a,0,1)$, being their uniform limit, must also have a minimum of zero. This proves that $e$ also has a rightmost geodesic passing through $(z,a)$ for some $z \geq 1$.

Now let $e=e(F,\rho)$ be the unique minimizing metric from Theorem \ref{geodesicshape}. Let $U$ be an open set containing $e$. By picking arbitrary finite rate $\tilde e \in U^C \cap S$, we see that $I^{-1}([0,I(\tilde e)])$ is compact because $I$ is a good rate function. 

Since $U^C \cap (S \cap I^{-1}([0,I(\tilde e)]))$ is an intersection of closed sets and is contained in a compact set, it is compact. Since $I$ is lower semi-continuous, $I$ must attain its minimum on some $\hat e \in U^C \cap S \cap I^{-1}([0,I(\tilde e)])$, which will be its minimum on $U^C \cap S$. By Theorem \ref{geodesicshape}, this minimum value is greater than $I_{min}$. Then by definition of the rate function, for some $\delta_1 > 0$, we have

\begin{equation} \label{lessthan}
    P(\mathcal L_{\varepsilon} \in U^C \cap S) \leq \exp(-(I_{min}+\delta_1+o(1))\varepsilon^{-\frac 32}).
\end{equation}

On the other hand, $U$ contains $e(\hat F,\hat \rho)$, where $\hat F(t) = (1+\lambda)F(t)$ and $\hat \rho(t) = (1+\lambda)^2\rho(t)+\lambda$. Then the length of the geodesic passing through $(1+\lambda,a)$ will be bounded away from the length of any path passing through $(a,z)$ with $z<1$, so this metric is in the interior of $S$, and hence the interior of $U \cap S$. We can pick $\lambda$ sufficiently small so that $I(\hat F, \hat \rho) = I_{min} + \delta_2$ with $\delta_2 < \delta_1$, and then

\begin{equation} \label{greaterthan}
    P(\mathcal L_{\varepsilon} \in U \cap S) \geq \exp(-(I_{min}+\delta_2+o(1))\varepsilon^{-\frac 32}).
\end{equation}

Since $\mathcal M_{\varepsilon}$ is $\mathcal L_{\varepsilon}$ conditioned on being inside $S$, picking $\varepsilon$ sufficiently small and comparing \eqref{lessthan} and \eqref{greaterthan} gives the desired result.

\end{proof}

\eqref{greaterthan}, together with Theorem \ref{ratecomputation}, also imply Theorem \ref{ratetheorem}.

\begin{proof}[Proof of Theorem \ref{curveshapetheorem}]

By Theorem \ref{metrictheorem}, it suffices to show that uniform convergence on bounded sets to $e=e(F,\rho)$ implies uniform convergence of rightmost geodesics to $F$. We will prove the following slightly stronger statement: for $\varepsilon > 0$, there exists an open set $U$ containing $e$ such that for any $\hat e \in U$, $\hat e$ admits no geodesics $\hat \gamma$ with $\hat \gamma(a) \geq 1$, unless $\hat \gamma$ satisfies $|\hat \gamma - F|_u < \varepsilon$. We will do this by finding a negative upper bound on the length of $\hat \gamma$ in $e=e(F,\rho)$, by considering very large $\hat \gamma$, then $\hat \gamma$ which are very close to $F$ except possibly at $0,a,1$, and then generic $\hat \gamma$.

By Lemma 4.10 in \cite{Das_Dauvergne_Virag_2024}, fix $N$ large enough so that $e(0,0,x,t)+e(x,t,0,1) < -1$ for $|x| \geq N$. Then if $\hat \gamma$ passes through any such $(x,t)$, its length is at most $-1$. This is the first case.

For the second case, fix $\delta > 0$ so that $\delta \leq \frac{(\varepsilon/2 - \delta \sup|F'|)^2}{1+\int_0^1 \rho}$ and suppose $|\hat \gamma(t) - F(t)| < \frac{\varepsilon}{2}$ for $\min(|t|,|t-a|,|t-1|) \geq \frac{\delta}2$, but that $|\hat \gamma - F|_u \geq \varepsilon$. Then $\hat \gamma$ changes by at least $\frac{\varepsilon}2 - \delta \sup|F'|$ over an interval of length at most $\delta$, so its Dirichlet length is at most $-1-\int_0^1 \rho$ and the length of $\hat \gamma$ is at most $-1$.

For the third case, observe that by Lemma \ref{caseslemma}, $F$ is the unique geodesic of $e$ that passes through $(1,a)$. In other words,

$$
e(0,0,y,t)+e(y,t,1,a)+e(1,a,0,1) < e(0,0,0,1), \forall t \in (0,a), \forall y \neq F(t)
$$
$$
e(0,0,1,a)+e(1,a,y,t)+e(y,t,0,1) < e(0,0,0,1), \forall t \in (a,1), \forall y \neq F(t).
$$

The continuous functions

$$
    \sup_{z \geq 1}e(0,0,y,t)+e(y,t,z,a)+e(z,a,0,1)
$$ 
$$
\sup_{z \geq 1}e(0,0,z,a)+e(z,a,y,t)+e(y,t,0,1)
$$
 attain their maximum values over the compact set 
 $$ \{ (y,t) : \min(|t|,|t-a|,|t-1|) \geq \frac{\delta}2, |y - F(t)| \geq \frac{\varepsilon}2, |y| \leq N \},
 $$ and since no geodesic of $e$ passes through both $(z,a)$ and $(y,t)$, these maxima are negative. Thus there exists $m > 0$ such that every curve $\hat \gamma$ passing through $(z,a)$ with $z \geq 1$ and $|\hat \gamma-F|_u \geq \varepsilon$ has length at most $-m$ in $e$. If $|\hat e - e| \leq \alpha$, then the length of $\hat \gamma$ in $\hat e$ is at most $-m+3\alpha$.
 
 We can now set $U$ to be the set of metrics that differ from $e$ by at most $\frac{m}5$ whenever $|x|,|y| \leq N$. Any geodesic $\hat \gamma$ with $\hat \gamma(a) \geq 1$ must fall into one of the above three cases, so the length of $\hat \gamma$ in any $\hat e \in U$ is at most $-\frac {2m}5$, while $\hat e(0,0,0,1) \geq -\frac m5$, so that $\hat \gamma$ is not a geodesic for $\hat e$.

\end{proof}

 \bibliographystyle{dcu}
 \bibliography{conjbib}

\end{document}